\title{A randomized preconditioned\\ Cholesky-QR algorithm\thanks{The work of both authors 
was supported in part by NSF grant DMS-1760374.
The second author was also supported in part by NSF grant CCF-2209510, and DOE grant DE-SC0022085.}}
\author{James E. Garrison\thanks{Department of Mathematics, 
North Carolina State University, Raleigh, NC 27695-8205, USA,
\email{jegarri3@ncsu.edu}}
\and Ilse C.F. Ipsen\thanks{Department of Mathematics, 
North Carolina State University, Raleigh, NC 27695-8205, USA,
\email{ipsen@ncsu.edu}}}
\begin{document}
\maketitle

\begin{abstract}
We a present and analyze \textit{rpCholesky-QR},
a randomized preconditioned Cholesky-QR algorithm for computing the
thin QR factorization of real $m\times n$ matrices
with rank~$n$. \textit{rpCholesky-QR}
has a low orthogonalization error, a residual on the order of machine precision,
and does not break down for highly singular matrices.
We derive rigorous and interpretable two-norm perturbation 
bounds for \textit{rpCholesky-QR} that require a minimum of assumptions. 
Numerical experiments corroborate the accuracy of \textit{rpCholesky-QR} for preconditioners sampled from as few as $3n$ rows, and
illustrate that the two-norm deviation from orthonormality increases with only the condition 
number
of the preconditioned matrix, rather than its square ---even if the original matrix is numerically singular.
\end{abstract}

\begin{keywords}
Cholesky factorization, QR factorization, perturbation bounds,  condition number, 
random sampling with replacement
\end{keywords}

\begin{MSCcodes}
65F35, 68W20, 60B20, 15A12, 15A18, 15A42, 15B10
\end{MSCcodes}

\section{Introduction}
Given a tall and skinny matrix $\ma\in\rmn$ with $\rank(\ma)=n\ll m$, we consider the computation
of a thin QR factorization $\ma=\mq\mr$ with a Cholesky-QR algorithm. 
Cholesky-QR algorithms compute
an explicit matrix $\mq\in\rmn$ with orthonormal columns ---as opposed to a full-fledged orthogonal matrix. Compared 
to other orthogonalization methods, Cholesky-QR algorithms can exhibit superior
performance on cache-based and parallel architectures \cite{StathWu2002},
because they are  high in BLAS-3 operations and 
perform well when communication (data movement, synchronization) dominates arithmetic \cite{Yama2015}. However, Cholesky-QR algorithms can be numerically unstable
or break down 
for less than perfectly conditioned matrices.

Applications of Cholesky QR algorithms include implementations of intermediate 
orthogonalization steps
in Krylov space methods \cite{Balabanov2022,BG2022,BG2023}, and high-performance
implementations of LOBPCG \cite{DSYG2018,KMS2023}.

\subsection{Contributions} 
Our proposed algorithm \textit{rpCholesky-QR} has the following advantages:
\begin{enumerate}
\item \textit{rpCholesky-QR} is a simple two-stage algorithm, with about the
same operation count as \textit{Cholesky-QR2} in \cite{YNYF2015}.
\item The number of rows sampled for the preconditioner can be as low as $3n$.
\item For well to moderately conditioned matrices, \textit{rpCholesky-QR} has the same high accuracy as \textit{Cholesky-QR2} in terms of deviation from orthonormality and residual.
\item \textit{rpCholesky-QR} works for highly ill-conditioned and numerically singular matrices, where
other Cholesky-QR algorithms break down. 
\item The two-norm residual of \textit{rpCholesky-QR} is always on the order of machine precision.
\item The two-norm 
deviation from orthonormality of \textit{rpCholesky-QR} tends to increase 
with only the condition number of the preconditioned
matrix rather than its square ---even if the original matrix $\ma$ is numerically singular.
\item The perturbation analysis for \textit{rpCholesy-QR}  requires  only a minimal amount of assumptions and produces
true bounds, rather than first-order estimates.
\end{enumerate}

\subsection{Existing work}
We distinguish between deterministic and randomized Cho\-lesky-QR algorithms.

\paragraph{Deterministic algorithms}
While the computation $\mq=\ma\mr^{-1}$ was pointed out in \cite[section 1]{Bjorck1967},
 the notion of a Cholesky-QR algorithm can apparently be traced back to Poincar\'{e} and
chemistry literature from the 1950's \cite[section 1]{StathWu2002}. 

Recent work is concerned with reducing the dependence of the orthogonalization error, that 
is, the deviation of the computed $\mq$ from orthonormality,
on the squared condition number of $\ma$.
Stathopoulos and Wu \cite{StathWu2002} present an approach that replaces the Cholesky 
factorization with a regularized  SVD of $\ma$.
A mixed precision Cholesky QR \cite{Yama2015} for use on GPUs computes the first two steps (Gram matrix formation and Cholesky factorization) in higher arithmetic
precision. 

Akin to the 'twice is enough' orthogonalization strategy, the \textit{Cholesky-QR2} algorithm
\cite{YNYF2015} repeats the Cholesky QR algorithm on the computed orthonormal
factor. However, the algorithm can still break down
if $\ma$ is too ill-conditioned for the Gram matrix to have a Cholesky factorization.
The remedy in \cite{FKNYY2020} is to run Cholesky-QR thrice,
with the first stage ensuring the existence of a Cholesky factorization by shifting the Gram matrix prior to the factorization to ensure positive definiteness. 

Conditions in \cite{Barlow2024} ensure the numerical backward stability of
 a block Gram-Schmidt algorithm based on mixed-precision Cholesky-QR algorithms
from~\cite{Yama2015}. The orthogonalization error of a randomized block Gram-Schmidt algorithm 
can be improved by post-processing it with Cholesky-QR \cite[Remark 2.1]{BG2023}.

Cholesky-QR algorithms in oblique inner products are considered in \cite{YNYF2016},
and Cholesky-LU-QR algorithms in \cite{TOO2020}.

\paragraph{Randomized algorithms}
Current algorithms are based on sketching with Gaussian matrices or sampling
without prior smoothing.
A sketched Cholesky-QR in a sketch-orthogonal basis 
is mentioned in \cite[Remark 2.10]{BG2022} and presented in  \cite[Algorithm 2]{Balabanov2022} and
\cite[Algorithm 2.2]{BG2023},
while a preconditioned sketched Cholesky-QR algorithm is presented in
\cite[Algorithm 3]{Balabanov2022} and \cite[Algorithm 2.5]{BG2023}. Cholesky-QR algorithms with preconditioners from randomized 
LU and QR factorizations
in \cite{FanGuo2021} are based on multiplication by Gaussians or row sampling, but without
prior smoothing to improve the coherence.
A multi-sketch algorithm is presented in \cite{HSBY2023}.
Rank-revealing randomized Cholesky-QR algorithms with pivoting are proposed in \cite{Balabanov2022,MBMDML24}.

\subsection{Overview}
To set the stage for the preconditioned algorithm, we present the motivation
for and an analysis of the basic Cholesky-QR algorithm (Section~\ref{s_basic}).
These are followed by the perturbation analysis of
a preconditioned Cholesky-QR algorithm with a fixed user-specified preconditioner 
(Section~\ref{s_precond}) and our randomized preconditioned Cholesky-QR algorithm
\textit{rpCholesky-QR} (Section~\ref{s_rand}). Numerical experiments conclude the paper
(Section~\ref{s_exp}).

\subsection{Notation}
 The singular values of a matrix $\ma\in\rmn$ with $m\geq n$ are 
 $\sigma_1(\ma)\geq \cdots\geq \sigma_n(\ma)\geq 0$.
 The two-norm condition number with respect to left inversion of $\ma\in\rmn$
 with $\rank(\ma)=n$ is 
 $\kappa(\ma)\equiv \|\ma\|_2\|\ma^{\dagger}\|_2=\sigma_1(\ma)/\sigma_n(\ma)$.
 The eigenvalues of a symmetric matrix $\mg\in\rnn$ are 
 $\lambda_1(\mg)\geq \cdots \geq \lambda_n(\mg)$.
 The columns of the identity matrix are 
 $\mi_n=\begin{bmatrix}\ve_1 & \cdots & \ve_n\end{bmatrix}\in\rnn$.

  \section{Basic Cholesky QR}\label{s_basic}
  To set the stage for the preconditioned algorithm, we present the motivation
for the basic Cholesky-QR algorithm in Section~\ref{s_bcqr}, and a perturbation analysis
in Section~\ref{s_bcqrs}.

 \subsection{Basic Cholesky-QR in exact arithmetic}\label{s_bcqr}
 Given a tall and skinny matrix $\ma\in\rmn$ with $\rank(\ma)=n$ and $m\gg n$,
 the goal is to compute a thin
QR decomposition 
\begin{align*}
\ma=\mq\mr
\end{align*}
 where $\mq\in\rmn$ has orthonormal columns, $\mq^T\mq=\mi_n$, and $\mr\in\rnn$
 is upper triangular nonsingular.
 
 Algorithm~\ref{alg_0} reduces the dimension of the problem by computing the Cholesky decomposition of the smaller dimensional Gram matrix $\mg\equiv\ma^T\ma\in\rnn$, which yields the
 upper triangular matrix $\mr$. A subsequent multiplication of $\ma$ with $\mr^{-1}$ produces the orthonormal factor $\mq$.
 The multiplication with $\mr^{-1}$ is implemented as $m$ lower triangular solves $\mr^T\mq^T=\ma^T$, one for each row of~$\mq$.

\begin{algorithm}
\caption{Basic Cholesky-QR}\label{alg_0}
\begin{algorithmic}[1]
\REQUIRE $\ma\in\rmn$ with $\rank(\ma)=n$ 
\ENSURE Thin QR decomposition $\ma=\mq\mr$
\medskip

\STATE{Multiply $\mg=\ma^T\ma$} \qquad \COMMENT{Gram matrix}
\STATE{Factor $\mg=\mr^T\mr$} \qquad \COMMENT{Triangular Cholesky factor $\mr\in\rnn$  of Gram  matrix}
\STATE{Solve $\mq=\ma\mr^{-1}$} \qquad 
\COMMENT{Orthonormal QR factor  of $\ma$}
\end{algorithmic}
\end{algorithm}

The idea behind the Cholesky-QR algorithm is that any two `Cholesky' factors of a  matrix
are orthogonally related. 

\begin{lemma}\label{l_r1r2}
Let $\mg\in\rnn$ be symmetric positive definite, with factorizations
\begin{align*}
\mg=\mr_1^T\mr_1=\mr_2^T\mr_2,
\end{align*}
where $\mr_1\in\rnn$ is nonsingular, and $\mr_2\in\rmn$ has $\rank(\mr_2)=n$.

Then $\mr_2=\mq\mr_1$ where $\mq\equiv \mr_2\mr_1^{-1}$ satisfies $\mq^T\mq=\mi_n$.

If $\mr_1$ and $\mr_2$ are both square with positive diagonal elements, then $\mq=\mi$,
confirming the uniqueness of the Cholesky factorization.
\end{lemma}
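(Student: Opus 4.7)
The lemma is essentially a direct computation, so my plan is to exhibit the orthogonal factor explicitly and then verify the two claimed properties by substitution.

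First, since $\mr_1$ is nonsingular, I would simply define $\mq \equiv \mr_2\mr_1^{-1} \in \rmn$. With this definition the identity $\mr_2 = \mq\mr_1$ is immediate. The key step is to verify orthonormality of the columns of $\mq$: substituting the definition and then using both factorizations of $\mg$,
\begin{equation*}
\mq^T\mq = \mr_1^{-T}\mr_2^T\mr_2\mr_1^{-1} = \mr_1^{-T}\mg\mr_1^{-1} = \mr_1^{-T}\mr_1^T\mr_1\mr_1^{-1} = \mi_n.
\end{equation*}
This already gives the first assertion. (Note that the rank-$n$ hypothesis on $\mr_2$ is consistent with, and indeed forced by, $\mq^T\mq=\mi_n$; it is not strictly needed for the derivation itself since $\mg$ is positive definite.)

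For the uniqueness claim, the setup is that $\mr_1,\mr_2$ are square Cholesky factors, so I would treat them as upper triangular with positive diagonal entries. The inverse $\mr_1^{-1}$ is then also upper triangular with positive diagonal, and therefore $\mq = \mr_2\mr_1^{-1}$ is upper triangular. Combined with $\mq^T\mq=\mi_n$ from the first part, $\mq$ is an orthogonal upper triangular matrix, which forces $\mq$ to be diagonal with entries $\pm 1$. Finally, comparing diagonals in $\mr_2=\mq\mr_1$ shows that each diagonal entry of $\mq$ is the ratio of a positive entry of $\mr_2$ to a positive entry of $\mr_1$, hence equal to $+1$. Therefore $\mq=\mi_n$ and $\mr_2=\mr_1$.

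I do not expect any serious obstacle: the argument is algebraic manipulation plus the elementary fact that an orthogonal triangular matrix is a signed diagonal. The only thing to be careful about is to state where each hypothesis is used (nonsingularity of $\mr_1$ for defining $\mq$, the two factorizations of $\mg$ for the orthonormality calculation, and upper-triangularity with positive diagonal for the uniqueness conclusion).
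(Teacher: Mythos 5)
Your proof is correct. Note that the paper itself does not provide a proof of this lemma; it only remarks that the statement is the full-rank case of \cite[Proposition 4]{Crone81} with the explicit formula $\mq=\mr_2\mr_1^{-1}$ added. Your argument---direct verification $\mq^T\mq=\mr_1^{-T}\mr_2^T\mr_2\mr_1^{-1}=\mr_1^{-T}\mg\mr_1^{-1}=\mi_n$, followed by the observation that an orthogonal upper triangular matrix with positive diagonal is the identity---is the standard elementary route and supplies exactly the details the paper leaves to the reference. Your side remark that $\rank(\mr_2)=n$ is forced by positive definiteness of $\mg$ (since $\mr_2x=0$ with $x\neq 0$ would give $x^T\mg x=0$) is also correct and could be stated as a one-line justification rather than a parenthetical.
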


Lemma~\ref{l_r1r2} is the full-rank case of \cite[Proposition 4]{Crone81} with an added explicit
expression for~$\mq$.

\subsection{Perturbation analysis of basic Cholesky-QR}\label{s_bcqrs}  
We analyze the sensitivity of the basic Cholesky-QR Algorithm~\ref{alg_0} 
  with the model in Algorithm~\ref{alg_0s}, where the perturbations are numbered according to the steps in which they occur: 
  
\begin{description}
\item[$\me$:\ ] input perturbation of $\ma$;
\item[$\me_1$:\ ] forward error in the multiplication of $\ma+\me$ with its transpose;
\item[$\me_2$:\ ] backward error in the Cholesky factorization of $\widehat{\mg}$; 
\item[$\me_3$:\ ] backward error (residual) in the solution of the linear system with matrix $\widehat{\mr}$
and right-hand side $\ma+\me$.
\end{description}
     
\begin{algorithm}
\caption{Perturbed Basic Cholesky-QR}\label{alg_0s}
\begin{algorithmic}[1]
\REQUIRE $\ma\in\rmn$  with $\rank(\ma)=n$
\ENSURE Thin QR decomposition $\ma\approx\widehat{\mq}\widehat{\mr}$
\medskip

\STATE{Multiply $\widehat{\mg}=(\ma+\me)^T(\ma+\me)+\me_1$} 
\STATE{Factor $\widehat{\mg}+\me_2=\widehat{\mr}^T\widehat{\mr}$} \qquad 
\STATE{Solve $\widehat{\mq}=\left((\ma+\me)+\me_3\right)\widehat{\mr}^{-1}$} 
\end{algorithmic}
\end{algorithm}

Theorem~\ref{t_perturb1} below presents an analysis of Algorithm~\ref{alg_0s}, which is the perturbed version of the exact Algorithm~\ref{alg_0}. In IEEE double precision, the norm wise relative input error can be expected to be 
 $\epsilon_A=\|\me\|_2/\|\ma\|_2\approx 10^{-16}$.

  \begin{theorem}\label{t_perturb1}
 Let $\ma\in\rmn$ with $\rank(\ma)=n$. 
Assume that the errors $\me_1, \me_2\in\rnn$ in Algorithm~\ref{alg_0s}
are symmetric, and
  \begin{align}
 \widehat{\mg}&=(\ma+\me)^T(\ma+\me) +\me_1, \qquad \epsilon_A\equiv \frac{\|\me\|_2}{\|\ma\|_2}, \quad \epsilon_1\equiv\frac{\|\me_1\|_2}{\|\ma+\me\|_2^2}
 \label{s_12}\\
 \widehat{\mg}+\me_2&=\widehat{\mr}^T\widehat{\mr}, \qquad 
 \epsilon_2\equiv \frac{\|\me_2\|_2}{\|\widehat{\mg}\|_2}
\label{s_22}\\
 \widehat{\mq}\widehat{\mr}&=(\ma+\me)+\me_3, \qquad 
 \epsilon_3\equiv \frac{\|\me_3\|_2}{\|\ma+\me\|_2}.\label{s_32}
 \end{align}
 Define
 \begin{align*}
 \gamma_1&\equiv (1+\epsilon_A)^2
\left( \epsilon_1+(1+\epsilon_1)\epsilon_2+2\epsilon_3+\epsilon_3^2\right)\\
\gamma_2&\equiv 2\epsilon_A+\epsilon_A^2+(1+\epsilon_A)^2\left(\epsilon_1+(1+\epsilon_1)\epsilon_2\right).
  \end{align*} 
If $\kappa(\ma)^2\gamma_2<1$, then $\widehat{\mg}+\me_2$ is symmetric positive definite, and
   \begin{align}
   \kappa(\widehat{\mr})&\leq \kappa(\mr)\>
   \sqrt{\frac{1+\gamma_2}{1-\kappa(\ma)^2\gamma_2}}\\
    \|\mi-\widehat{\mq}^T\widehat{\mq}\|_2&\leq  
 \frac{\kappa(\ma)^2\>\gamma_1}{1-\kappa(\ma)^2\>\gamma_2}\label{e_perturb1}\\
 \frac{\|\ma-\widehat{\mq}\widehat{\mr}\|_2}{\|\ma\|_2}&\leq \epsilon_A+(1+\epsilon_A)\>
 \epsilon_3.\label{e_perturb2}
 \end{align}
 \end{theorem}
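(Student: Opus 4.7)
The plan is to prove the four claims in their natural dependency order: the residual bound stands alone; the positive definiteness of $\widehat{\mg}+\me_2$ and the bound on $\kappa(\widehat{\mr})$ come from an eigenvalue analysis of $\widehat{\mr}^T\widehat{\mr}$; and the orthogonalization bound then piggy-backs on the lower bound for $\sigma_n(\widehat{\mr})$. The residual bound~\eqref{e_perturb2} is immediate: subtracting~\eqref{s_32} from $\ma$ gives $\ma-\widehat{\mq}\widehat{\mr}=-\me-\me_3$, and the triangle inequality with $\|\me_3\|_2\leq\epsilon_3(1+\epsilon_A)\|\ma\|_2$ yields the stated estimate.

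The conditioning argument begins by combining~\eqref{s_12} and~\eqref{s_22} into
\[
\widehat{\mr}^T\widehat{\mr} = (\ma+\me)^T(\ma+\me)+\me_1+\me_2,
\]
to which I would apply Weyl's monotonicity theorem with base matrix $\ma^T\ma$. Expanding $(\ma+\me)^T(\ma+\me)=\ma^T\ma+\ma^T\me+\me^T\ma+\me^T\me$, the cross terms contribute at most $(2\epsilon_A+\epsilon_A^2)\|\ma\|_2^2$, which accounts for the leading part of~$\gamma_2$. Bounding $\|\me_1\|_2\leq\epsilon_1\|\ma+\me\|_2^2$ and, via $\|\widehat{\mg}\|_2\leq(1+\epsilon_1)\|\ma+\me\|_2^2$, $\|\me_2\|_2\leq(1+\epsilon_1)\epsilon_2\|\ma+\me\|_2^2$, then rescaling through $\|\ma+\me\|_2^2\leq(1+\epsilon_A)^2\|\ma\|_2^2$, completes~$\gamma_2$. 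The outcome is
\[
\sigma_n(\ma)^2\bigl(1-\kappa(\ma)^2\gamma_2\bigr)\leq\lambda_n(\widehat{\mr}^T\widehat{\mr})\leq\lambda_1(\widehat{\mr}^T\widehat{\mr})\leq(1+\gamma_2)\sigma_1(\ma)^2,
\]
where the upper bound rests on the telescoping identity $1+\gamma_2=(1+\epsilon_A)^2(1+\epsilon_1)(1+\epsilon_2)$. The hypothesis $\kappa(\ma)^2\gamma_2<1$ then forces the lower bound to be strictly positive, delivering positive definiteness of $\widehat{\mg}+\me_2$; forming the ratio together with $\sigma_1(\mr)=\sigma_1(\ma)$ and $\sigma_n(\mr)=\sigma_n(\ma)$ produces the bound on $\kappa(\widehat{\mr})$.

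For the orthogonalization bound~\eqref{e_perturb1}, I would use the identity
\[
\widehat{\mq}^T\widehat{\mq} = \widehat{\mr}^{-T}\bigl((\ma+\me)+\me_3\bigr)^T\bigl((\ma+\me)+\me_3\bigr)\widehat{\mr}^{-1} = \mi+\widehat{\mr}^{-T}\mathbf{X}\widehat{\mr}^{-1},
\]
where $\mathbf{X}=-\me_1-\me_2+(\ma+\me)^T\me_3+\me_3^T(\ma+\me)+\me_3^T\me_3$ is obtained by substituting $(\ma+\me)^T(\ma+\me)=\widehat{\mr}^T\widehat{\mr}-\me_1-\me_2$. Bounding the five pieces of $\mathbf{X}$ on the scale $\|\ma+\me\|_2^2$ and rescaling to $\|\ma\|_2^2$ yields $\|\mathbf{X}\|_2\leq\gamma_1\|\ma\|_2^2$, while $\|\widehat{\mr}^{-1}\|_2^2\leq 1/[\sigma_n(\ma)^2(1-\kappa(\ma)^2\gamma_2)]$ from the previous step delivers~\eqref{e_perturb1} after multiplication.

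The only genuine obstacle is bookkeeping: all of the intermediate bounds arise naturally on the scales $\|\ma+\me\|_2$ or $\|\widehat{\mg}\|_2$, and they must be rescaled to $\|\ma\|_2$ through the $(1+\epsilon_A)$ and $(1+\epsilon_1)$ factors in such a way that every residue collapses exactly into the closed-form definitions of $\gamma_1$ and $\gamma_2$, rather than leaving a linearized remainder. The symmetry of $\me_1$ and $\me_2$ is used only implicitly, through the identification of $\lambda_n(\widehat{\mr}^T\widehat{\mr})$ with a real eigenvalue of the symmetric matrix $\widehat{\mg}+\me_2$.
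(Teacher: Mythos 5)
Your proof is correct, and it is in effect the same argument the paper uses, just run in the special case rather than in full generality. The paper proves Theorem~\ref{t_perturb1} simply by observing it is the special case of Theorem~\ref{t_perturb2} with $\me_s=\vzero$, $\mr_s=\mi$, $\me_4=\vzero$ (so $\epsilon_F=\epsilon_A$, $\epsilon_4=0$, $\eta=1$), and the eight-step proof of Theorem~\ref{t_perturb2} uses precisely your three ingredients in the same order: (i) Weyl's theorem applied to the symmetric perturbation of the Gram matrix to obtain positive definiteness and two-sided eigenvalue bounds (hence the $\kappa(\widehat{\mr})$ bound and $\|\widehat{\mr}^{-1}\|_2^2\le 1/[\sigma_n^2(1-\kappa^2\gamma_2)]$); (ii) the exact identity $\widehat{\mq}^T\widehat{\mq}=\mi+\widehat{\mr}^{-T}\mathbf{X}\widehat{\mr}^{-1}$ with $\mathbf{X}$ collecting $-\me_1-\me_2$ and the $\me_3$ cross-terms; (iii) the telescoped triangle inequality for the residual. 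Your bookkeeping checks out, including the observation that $1+\gamma_2=(1+\epsilon_A)^2(1+\epsilon_1)(1+\epsilon_2)$, which the paper's $\gamma_2$ was clearly engineered to satisfy. The only practical difference is organizational: the paper invests the effort once in the preconditioned setting (tracking $\epsilon_F$, $\eta$, $\epsilon_4$) and then harvests Theorem~\ref{t_perturb1} for free, whereas your direct proof would need to be redone to obtain Theorem~\ref{t_perturb2}; conversely, your version is shorter and more readable if one only wants the unpreconditioned result.
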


 \begin{proof}
 This is a special case of Theorem~\ref{t_perturb2} where
 $\me_s=\vzero$, $\mr_s=\mi$ and $\me_4=\vzero$, resulting in
 $\epsilon_F=\epsilon_A$, $\epsilon_4=0$ and $\eta=1$.
 \end{proof}

The errors in the deviation of $\widehat{\mq}$ from orthonormality (\ref{e_perturb1})
 are amplified by the square of the condition number of $\ma$.
  Like most residuals from linear solvers, the residual (\ref{e_perturb2}) of the computed 
  QR factorization shows no dependence on the condition number of the coefficient matrix $\widehat{\mr}$.

\section{Preconditioned Cholesky-QR}\label{s_precond}
We present a preconditioned Cholesky-QR algorithm with a fixed user-specified preconditioner in
Section~\ref{s_pcqr}, perturbation results in Section~\ref{s_pcqrs}, and the derivation in
Section~\ref{s_proof2}.

\subsection{Preconditioned Cholesky-QR in exact arithmetic}\label{s_pcqr}
Instead of computing the Gram matrix of the original matrix~$\ma$, Algorithm~\ref{alg_1}
first preconditions~$\ma$ with the nonsingular matrix $\mr_s$, and then applies
the Cholesky-QR algorithm to
the hopefully better conditioned matrix $\ma_1\equiv\ma\mr_s^{-1}$.

\begin{algorithm}
\caption{Preconditioned Cholesky-QR}\label{alg_1}
\begin{algorithmic}[1]
\REQUIRE $\ma\in\rmn$ with $\rank(\ma)=n$, nonsingular preconditioner $\mr_s\in\rnn$
\ENSURE Thin QR decomposition $\ma=\mq\mr$
\medskip

\STATE \COMMENT{Precondition}
\STATE{Solve $\ma_1=\ma\mr_s^{-1}$} \qquad \COMMENT{$\ma_1\in\rmn$ is preconditioned version of $\ma$}
\medskip

\STATE \COMMENT{Cholesky-QR of $\ma_1$}
\STATE{Multiply $\mg_1=\ma_1^T\ma_1$} \qquad \COMMENT{Gram matrix}
\STATE{Factor $\mg_1=\mr_2^T\mr_2$} \qquad \COMMENT{Cholesky factor $\mr_2\in\rnn$  of Gram  matrix}
\STATE{Solve $\mq=\ma_1\mr_2^{-1}$} \qquad 
\COMMENT{Orthonormal QR factor of $\ma_1$ and $\ma$}
\medskip

\STATE \COMMENT{Recover $\mr$}
\STATE{Multiply $\mr=\mr_2\mr_s$} \qquad \COMMENT{Triangular QR factor of $\ma$}
\end{algorithmic}
\end{algorithm}
 
 \begin{remark}\label{r_precond}
As long as $\mr_s$ is nonsingular, the preconditioned matrix $\ma_1$ 
has the same orthonormal factor as the original matrix $\ma$.

This is because, if  $\mr_s$ is nonsingular, then so is $\mr_2$. Then lines 2, 6 and~8 of Algorithm~\ref{alg_1} imply
\begin{align}\label{e_q}
\ma=\ma_1\mr_s=\mq\mr_2\mr_s.
\end{align}
\end{remark}

 \subsection{Perturbation analysis of preconditioned Cholesky-QR}\label{s_pcqrs}
We analyze the sensitivity of Algorithm~\ref{alg_1} 
  with the model in Algorithm~\ref{alg_1s}.
 The perturbations are numbered as in  
 the perturbed  basic Cholesky-QR Algorithm~\ref{alg_1s},
  \begin{description}
\item[$\me$:\ ] input perturbation of $\ma$;
\item[$\me_s$:\ ] backward error (residual) in the solution of the linear system with
matrix $\mr_s$ and solution $\widehat{\ma}_1$;
\item[$\me_1$:\ ] forward error in the multiplication of $\widehat{\ma}_1$ with its transpose;
\item[$\me_2$:\ ] backward error in the Cholesky factorization of $\widehat{\mg}_1$; 
\item[$\me_3$:\ ] backward error (residual) in the solution of the linear system
with matrix $\widehat{\mr}_2$ and right hand side $\widehat{\ma}_1$;
\item[$\me_4$:\ ] forward error in the multiplication of $\widehat{\mr}_2$ with $\mr_s$.
\end{description}

\begin{algorithm}
\caption{Perturbed Preconditioned Cholesky-QR}\label{alg_1s}
\begin{algorithmic}[1]
\REQUIRE $\ma\in\rmn$ with $\rank(\ma)=n$, nonsingular preconditioner $\mr_s\in\rnn$ 
\ENSURE Thin QR decomposition $\ma\approx\widehat{\mq}\widehat{\mr}$
\medskip

\STATE \COMMENT{Precondition}
\STATE{Solve $\widehat{\ma}_1=\left((\ma+\me)+\me_s\right)\mr_s^{-1}$} 
\medskip

\STATE \COMMENT{Cholesky-QR of $\widehat{\ma}_1$}
\STATE{Multiply $\widehat{\mg}_1=\widehat{\ma}_1^T\widehat{\ma}_1+\me_1$} 
\STATE{Factor $\widehat{\mg}_1+\me_2=\widehat{\mr}_2^T\widehat{\mr}_2$} 
\STATE{Solve $\widehat{\mq}=(\widehat{\ma}_1+\me_3)\widehat{\mr}_2^{-1}$} 
\medskip

\STATE \COMMENT{Recover $\widehat{\mr}$}
\STATE{Multiply $\widehat{\mr}=\widehat{\mr}_2\mr_s+\me_4$} 
\end{algorithmic}
\end{algorithm}
 
 Theorem~\ref{t_perturb2} below presents
 an analysis of Algorithm~\ref{alg_1s}, which is the perturbed version of the exact Algorithm~\ref{alg_1}.

 \begin{theorem}\label{t_perturb2}
 Let $\ma\in\rmn$ with $\rank(\ma)=n$, and let $\mr_s\in\rnn$ be nonsingular.
Assume that  the errors $\me_1, \me_2\in\rnn$ in Algorithm~\ref{alg_1s} are symmetric and
  \begin{align}
 \widehat{\ma}_1&=\left((\ma+\me)+\me_s\right)\mr_s^{-1}, \qquad 
 \epsilon_A\equiv\frac{\|\me\|_2}{\|\ma\|_2}, \qquad
 \epsilon_s\equiv\frac{\|\me_s\|_2}{\|\ma_1\|_2\|\mr_s\|_2}\label{s_02a}\\
  \widehat{\mg}_1&=\widehat{\ma}_1^T\widehat{\ma}_1 +\me_1, 
 \qquad \epsilon_1\equiv \frac{\|\me_1\|_2}{\|\widehat{\ma}_1\|_2^2}, 
 \label{s_12a}\\
 \widehat{\mg}_1+\me_2&=\widehat{\mr}_2^T\widehat{\mr}_2, \qquad 
 \epsilon_2\equiv \frac{\|\me_2\|_2}{\|\widehat{\mg}_1\|_2}
\label{s_22a}\\
\widehat{\mq} \widehat{\mr}_2&=\widehat{\ma}_1+\me_3, \qquad 
 \epsilon_3\equiv \frac{\|\me_3\|_2}{\|\widehat{\ma}_1\|_2}\label{s_32a}\\
 \widehat{\mr}&=\widehat{\mr}_2\mr_s+\me_4,\qquad 
 \epsilon_4\equiv \frac{\|\me_4\|}{\|\widehat{\mr}_2\mr_s\|_2}.
\label{s_42a}
 \end{align}
 Define
 \begin{align*}
 \eta&\equiv \frac{\|\ma_1\|_2\|\mr_s\|_2}{\|\ma\|_2}\qquad \text{where}\qquad
 1\leq \eta\leq \kappa(\ma_1)\\
 \epsilon_F&\equiv (\epsilon_A+\epsilon_s)\kappa_2(\mr_s)\\ 
  \gamma_1&\equiv (1+\epsilon_F)^2
( \epsilon_1+(1+\epsilon_1)\epsilon_2+2\epsilon_3+\epsilon_3^2)\\
\gamma_2&\equiv 2\epsilon_F+\epsilon_F^2+(1+\epsilon_F)^2\left(\epsilon_1+
(1+\epsilon_1)\epsilon_2\right)\\
\gamma_3&\equiv \epsilon_4(1+\epsilon_F)(1+\epsilon_3).
\end{align*} 
If $\kappa(\ma_1)^2\gamma_2<1$, then $\widehat{\mg}_1+\me_2$ is symmetric positive definite, and
   \begin{align}
   \kappa(\widehat{\mr}_2)&\leq \kappa(\mr_2)\>
   \sqrt{\frac{1+\gamma_2}{1-\kappa(\ma_1)^2\gamma_2}}\label{e_perturb00a}\\
 \|\mi-\widehat{\mq}^T\widehat{\mq}\|_2&\leq 
 \frac{\kappa(\ma_1)^2\gamma_1}{1-\kappa(\ma_1)^2\gamma_2}\label{e_perturb01a}\\
\frac{\|\ma-\widehat{\mq}\widehat{\mr}\|_2}{\|\ma\|_2}&\leq
\epsilon_A+\left(\epsilon_s+(1+\epsilon_F)\epsilon_3\right)\eta +{\gamma_3}
\sqrt{\frac{1+\gamma_2}{1-\kappa(\ma_1)^2\gamma_2}}\>\eta\>\kappa(\ma_1).
    \label{e_perturb2a}
 \end{align}
 \end{theorem}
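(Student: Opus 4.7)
The plan is to view Algorithm~\ref{alg_1s} as a basic Cholesky-QR executed on an effective input $\widehat{\ma}_1$, followed by the cheap post-multiplication $\widehat{\mr} = \widehat{\mr}_2\mr_s + \me_4$. My first step is therefore to recast the preconditioning error (\ref{s_02a}) as a perturbation of the exact preconditioned matrix $\ma_1 = \ma\mr_s^{-1}$, namely $\widehat{\ma}_1 = \ma_1 + \me_0$ with $\me_0 \equiv (\me+\me_s)\mr_s^{-1}$. Using $\|\ma\|_2 \leq \|\ma_1\|_2\|\mr_s\|_2$ gives the clean bound $\|\me_0\|_2 \leq \epsilon_F\|\ma_1\|_2$, which in turn yields $\|\widehat{\ma}_1\|_2 \leq (1+\epsilon_F)\|\ma_1\|_2$. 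This single reduction is the reason the $\epsilon_A$ of Theorem~\ref{t_perturb1} gets replaced by $\epsilon_F$ throughout.

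Next, to establish positive definiteness of $\widehat{\mg}_1+\me_2$ and the $\kappa(\widehat{\mr}_2)$ bound (\ref{e_perturb00a}), I expand
\[
\widehat{\mr}_2^T\widehat{\mr}_2 \;=\; \ma_1^T\ma_1 + \widetilde{\me}, \qquad \widetilde{\me} \equiv \ma_1^T\me_0 + \me_0^T\ma_1 + \me_0^T\me_0 + \me_1 + \me_2,
\]
and bound each piece of $\widetilde{\me}$ by the triangle inequality to obtain $\|\widetilde{\me}\|_2 \leq \gamma_2\|\ma_1\|_2^2$: the two cross terms and the quadratic term contribute $(2\epsilon_F+\epsilon_F^2)\|\ma_1\|_2^2$, while $\me_1$ and $\me_2$ contribute the remaining $(1+\epsilon_F)^2(\epsilon_1+(1+\epsilon_1)\epsilon_2)\|\ma_1\|_2^2$ after converting $\|\widehat{\ma}_1\|_2^2$ to $\|\ma_1\|_2^2$. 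Weyl's inequality applied to $\ma_1^T\ma_1 + \widetilde{\me}$ together with $\sigma_j(\mr_2)=\sigma_j(\ma_1)$ then produces
\[
\sigma_n(\widehat{\mr}_2)^2 \geq (1-\kappa(\ma_1)^2\gamma_2)\,\sigma_n(\mr_2)^2, \qquad \sigma_1(\widehat{\mr}_2)^2 \leq (1+\gamma_2)\,\sigma_1(\mr_2)^2,
\]
from which positive definiteness and (\ref{e_perturb00a}) follow. For the orthonormality bound (\ref{e_perturb01a}) I substitute $\widehat{\mq}=(\widehat{\ma}_1+\me_3)\widehat{\mr}_2^{-1}$ from (\ref{s_32a}) into $\widehat{\mq}^T\widehat{\mq}$ and use (\ref{s_22a}) to cancel the leading $\widehat{\mr}_2^T\widehat{\mr}_2$ piece, leaving
\[
\widehat{\mq}^T\widehat{\mq}-\mi \;=\; \widehat{\mr}_2^{-T}\bigl(\me_3' - \me_1 - \me_2\bigr)\widehat{\mr}_2^{-1}, \qquad \|\me_3'\|_2 \leq (2\epsilon_3+\epsilon_3^2)\|\widehat{\ma}_1\|_2^2,
\]
so that the middle factor has norm at most $\gamma_1\|\ma_1\|_2^2$. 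The lower bound on $\sigma_n(\widehat{\mr}_2)$ established above converts $\|\widehat{\mr}_2^{-1}\|_2^2\|\ma_1\|_2^2$ into $\kappa(\ma_1)^2/(1-\kappa(\ma_1)^2\gamma_2)$, giving (\ref{e_perturb01a}).

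The residual (\ref{e_perturb2a}) is the step I expect to be most delicate because of the trailing $\kappa(\ma_1)$ factor. Telescoping (\ref{s_02a}), (\ref{s_32a}) and (\ref{s_42a}) produces the clean identity
\[
\widehat{\mq}\widehat{\mr} \;=\; \ma + \me + \me_s + \me_3\mr_s + \widehat{\mq}\me_4,
\]
and after dividing by $\|\ma\|_2$ the first three error terms immediately contribute $\epsilon_A$, $\epsilon_s\eta$, and $(1+\epsilon_F)\epsilon_3\eta$, where $\eta=\|\ma_1\|_2\|\mr_s\|_2/\|\ma\|_2$ absorbs the conversion between $\|\ma_1\|_2$ and $\|\ma\|_2$. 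For $\|\widehat{\mq}\me_4\|_2$ I would combine $\|\widehat{\mq}\|_2 \leq (1+\epsilon_3)(1+\epsilon_F)\|\ma_1\|_2\|\widehat{\mr}_2^{-1}\|_2$, obtained from (\ref{s_32a}), with $\|\me_4\|_2 \leq \epsilon_4\|\widehat{\mr}_2\|_2\|\mr_s\|_2$ from (\ref{s_42a}); the product $\|\widehat{\mr}_2\|_2\|\widehat{\mr}_2^{-1}\|_2 = \kappa(\widehat{\mr}_2)$ then assembles with the already-proved (\ref{e_perturb00a}) and $\kappa(\mr_2)=\kappa(\ma_1)$ into precisely the stated $\gamma_3\,\eta\,\kappa(\ma_1)\sqrt{(1+\gamma_2)/(1-\kappa(\ma_1)^2\gamma_2)}$. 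The main obstacle is not any single nontrivial inequality but the algebraic bookkeeping required to make the constants $\gamma_1$, $\gamma_2$, $\gamma_3$ and $\eta$ emerge in exactly their stated forms; the ordering above---preconditioning perturbation first, then Gram matrix, then $\widehat{\mq}^T\widehat{\mq}$, then residual---ensures each later step reuses norm bounds already derived.
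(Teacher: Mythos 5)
Your proposal is correct and follows essentially the same route as the paper's proof: absorb the preconditioning error into $\widehat{\ma}_1=\ma_1+\me_0$ with $\|\me_0\|_2\leq\epsilon_F\|\ma_1\|_2$, apply Weyl's inequality to $\widehat{\mr}_2^T\widehat{\mr}_2=\ma_1^T\ma_1+\widetilde{\me}$ to obtain positive definiteness and the singular-value bounds, substitute (\ref{s_32a}) and (\ref{s_22a}) to express $\widehat{\mq}^T\widehat{\mq}-\mi$, and telescope (\ref{s_02a}), (\ref{s_32a}), (\ref{s_42a}) for the residual with $\kappa(\mr_2)=\kappa(\ma_1)$ closing the last estimate. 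The only cosmetic difference is that you fold the paper's separate perturbation $\mf_1$ and the Cholesky backward error $\me_2$ into a single $\widetilde{\me}$ before invoking Weyl, whereas the paper tracks them in two steps.
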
 

\begin{proof} 
See section~\ref{s_proof2}.
\end{proof}

The bounds in Theorem~\ref{t_perturb2} have the following properties.

\begin{enumerate}
\item The quantity 
\begin{align*}
\eta\equiv \frac{\|\ma_1\|_2\|\mr_s\|_2}{\|\ma\|_2}=\frac{\|\ma_1\|_2\|\mr_s\|_2}{\|\ma_1\mr_s\|_2}
\end{align*}
is the condition number for the multiplication of $\ma_1$ and $\mr_s$ \cite[Fact 2.22]{IIbook}.
\item Unlike \cite[sections 4, Appendix~A]{HSBY2023}, there is no dependence on the condition number of 
the original matrix $\ma$.
\item All bounds depend on the condition number $\kappa(\ma_1)$ 
of the preconditioned matrix and on
the condition number of the preconditioner $\mr_s$, via
$\epsilon_F=(\epsilon_A+\epsilon_s)\kappa(\mr_s)$.

\item Unlike \cite[section~3.2]{YNYF2015} which presents the deviation from orthonormality in the 
Frobenius norm, the bounds in Theorem~\ref{t_perturb2} are in the two-norm. 
\end{enumerate}

\begin{corollary}[First-order version of Theorem~\ref{t_perturb2}]\label{c_perturb2}
Under the assumptions from Theorem~\ref{t_perturb2},
the respective first order terms corresponding to $\gamma_1$, $\gamma_2$ and $\gamma_3$
are
\begin{align*}
\widetilde{\gamma}_1&\equiv \epsilon_1+\epsilon_2+2\epsilon_3\\
\widetilde{\gamma}_2& \equiv 2\epsilon_F+\epsilon_1+\epsilon_2
=2(\epsilon_A+\epsilon_s)\kappa_2(\mr_s)+\epsilon_1+\epsilon_2\\
\widetilde{\gamma}_3&\equiv \epsilon_4,
\end{align*}
and the first order versions of the bounds are
 \begin{align*}
   \kappa(\widehat{\mr}_2)&\lesssim \sqrt{1+\widetilde{\gamma}_2(1+\kappa(\ma_1)^2)}\>
   \kappa(\mr_2)\\
  \|\mi-\widehat{\mq}^T\widehat{\mq}\|_2&\lesssim  
  (\epsilon_1+\epsilon_2+2\epsilon_3)\>\kappa(\ma_1)^2\\
\frac{\|\ma-\widehat{\mq}\widehat{\mr}\|_2}{\|\ma\|_2}&\lesssim
 \epsilon_A + \left(\epsilon_s+\epsilon_3
 +\epsilon_4\>\kappa(\ma_1)\right)\eta.
  \end{align*}
\end{corollary}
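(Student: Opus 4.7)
The plan is straightforward: treat all the $\epsilon$ parameters $\epsilon_A, \epsilon_s, \epsilon_1, \epsilon_2, \epsilon_3, \epsilon_4$ as small, and linearize the expressions for $\gamma_1, \gamma_2, \gamma_3$ and the three bounds in Theorem~\ref{t_perturb2} by dropping any term that is a product of two or more of these quantities. Nothing else is needed, since the assumption $\kappa(\ma_1)^2\gamma_2<1$ already holds in the regime of interest, and $\eta$ and the condition numbers $\kappa(\mr_2), \kappa(\ma_1), \kappa(\mr_s)$ are treated as order-one quantities that multiply the small $\epsilon$'s but are not themselves expanded.

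First I would linearize the three $\gamma_i$. From
$\gamma_1 \equiv (1+\epsilon_F)^2(\epsilon_1 + (1+\epsilon_1)\epsilon_2 + 2\epsilon_3 + \epsilon_3^2)$
the factor $(1+\epsilon_F)^2$ contributes only higher order, the cross term $\epsilon_1\epsilon_2$ is dropped, and $\epsilon_3^2$ is dropped, leaving $\widetilde{\gamma}_1 = \epsilon_1+\epsilon_2+2\epsilon_3$. Similarly, $\gamma_2 \equiv 2\epsilon_F + \epsilon_F^2 + (1+\epsilon_F)^2(\epsilon_1 + (1+\epsilon_1)\epsilon_2)$ reduces to $\widetilde{\gamma}_2 = 2\epsilon_F + \epsilon_1 + \epsilon_2$, and recalling $\epsilon_F = (\epsilon_A+\epsilon_s)\kappa(\mr_s)$ gives the stated form. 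Finally $\gamma_3 = \epsilon_4(1+\epsilon_F)(1+\epsilon_3)$ yields $\widetilde{\gamma}_3 = \epsilon_4$.

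Next I would substitute these into each of the three bounds. For the condition number bound \cref{e_perturb00a}, I would use the first-order expansions $1/(1-x) \approx 1+x$ and $\sqrt{1+y}\approx 1+y/2$ to write
\begin{align*}
\sqrt{\frac{1+\gamma_2}{1-\kappa(\ma_1)^2\gamma_2}} \;\lesssim\; \sqrt{(1+\widetilde{\gamma}_2)(1+\kappa(\ma_1)^2\widetilde{\gamma}_2)} \;\lesssim\; \sqrt{1+\widetilde{\gamma}_2(1+\kappa(\ma_1)^2)},
\end{align*}
discarding the resulting $\widetilde{\gamma}_2^2\kappa(\ma_1)^2$ cross term. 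For the orthogonality bound \cref{e_perturb01a}, the denominator $1-\kappa(\ma_1)^2\gamma_2$ becomes $1$ to first order, so the bound collapses to $\kappa(\ma_1)^2\widetilde{\gamma}_1 = (\epsilon_1+\epsilon_2+2\epsilon_3)\kappa(\ma_1)^2$. For the residual bound \cref{e_perturb2a}, the factor $(1+\epsilon_F)\epsilon_3$ reduces to $\epsilon_3$, and the square root factor together with $\gamma_3$ yields $\widetilde{\gamma}_3 \eta\kappa(\ma_1) = \epsilon_4\eta\kappa(\ma_1)$; factoring $\eta$ out of the last two groups produces the stated expression.

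There is no real obstacle here: the argument is pure bookkeeping of first-order Taylor expansions. The only place that requires any care is the square-root factor in \cref{e_perturb00a} and \cref{e_perturb2a}, where one must remember that the denominator contributes a $+\kappa(\ma_1)^2\widetilde{\gamma}_2$ rather than $+\widetilde{\gamma}_2$ because of the $\kappa(\ma_1)^2$ multiplier, and that in the residual bound this condition-number factor is absorbed into the $\eta\kappa(\ma_1)$ scaling of $\widetilde{\gamma}_3$ so the square root itself contributes only $1$ to leading order.
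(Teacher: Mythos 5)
Your proposal is correct and amounts to the standard first-order bookkeeping that the corollary implicitly invokes; the paper gives no separate proof of this corollary, treating it as immediate from Theorem~\ref{t_perturb2}, and your linearizations of $\gamma_1,\gamma_2,\gamma_3$ and the three bounds match what one would write out. Two minor remarks on presentation rather than substance: (i) the chain of $\lesssim$ for the condition-number bound is fine as a first-order statement, but it is not a genuine inequality chain since $1/(1-x)\geq 1+x$, so it should be read as ``equal to leading order'' rather than as an upper bound at each step; (ii) you mention $\sqrt{1+y}\approx 1+y/2$ as a tool but then, correctly, do not actually apply it to the condition-number bound, since the corollary deliberately leaves that factor under the square root --- worth stating explicitly that the square root is retained and only its argument is linearized, so a reader does not wonder whether a step was skipped.
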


The bounds in Corollary~\ref{c_perturb2} have the following properties.
\begin{enumerate}
\item To first order, the deviation of $\widetilde{\mq}$ from orthonormality and the residual
do not depend on the condition number $\kappa(\mr_s)$ of the preconditioner $\mr_s$.
  \item
 To first order, $\|\mi-\widehat{\mq}^T\widehat{\mq}\|_2$ does not depend on $\epsilon_A$
 and $\epsilon_s$. Thus, the deviation of $\widehat{\mq}$ from orthonormality
 does not depend strongly on the accuracy of the computed $\widehat{\ma}_1$.
 This makes sense in light of  Remark~\ref{r_precond}.
 
 However, the experiments in Section~\ref{s_exp2} illustrate that the deviation from 
orthonormality tends to grow only with $\kappa(\ma_1)$, rather than $\kappa(\ma_1)^2$.
  
  \item To first order, $\|\ma-\widehat{\mq}\widehat{\mr}\|_2/\|\ma\|_2$ does not depend on 
  $\epsilon_1$ and $\epsilon_2$. Thus, the residual does not depend strongly
  on the accuracy of the Gram matrix formation and the Cholesky decomposition.
It does depend, though, on the accuracy of the final triangular factor.  
 
 However, the numerical experiments in Section \ref{s_exp1} and~\ref{s_exp3} indicate that 
 the residual  remains at
machine precision, hence this bound can be pessimistic.
\end{enumerate}

\subsection{Proof of Theorem~\ref{t_perturb2}}\label{s_proof2}
The proof consists of eight steps.

\begin{enumerate}
\item Bound the forward error in $\widehat{\ma}_1$,
\begin{align}\label{e_aux32}
\widehat{\ma}_1=\left((\ma+\me)+\me_s\right)\mr_s^{-1}=
\underbrace{\ma\mr_s^{-1}}_{\ma_1}+\underbrace{(\me+\me_s)\mr_s^{-1}}_{\mf_s}
=\ma_1+\mf_s.
\end{align}
The triangle inequality implies 
\begin{align}\label{e_aux31}
\frac{\|\mf_s\|_2}{\|\ma_1\|_2}\leq 
\left(\frac{\|\me\|_2}{\|\ma_1\|_2}+\frac{\|\me_s\|_2}{\|\ma_1\|_2}\right)\|\mr_s^{-1}\|_2.
\end{align}
Line 4 of Algorithm~\ref{alg_1} implies
\begin{align*}
\frac{\|\me\|_2}{\|\ma_1\|_2}=\underbrace{\frac{\|\me\|_2}{\|\ma\|_2}}_{\epsilon_A}
\frac{\|\ma\|_2}{\|\ma_1\|_2}= \epsilon_A\frac{\|\ma_1\mr_s\|_2}{\|\ma_1\|_2}\leq
\epsilon_A\|\mr_s\|_2,
\end{align*}
while
\begin{align*}
\frac{\|\me_s\|_2}{\|\ma_1\|_2}=\underbrace{\frac{\|\me_s\|_2}{\|\mr_s\|_2\|\ma_1\|_2}}_{\epsilon_s}
\|\mr_s\|_2=\epsilon_s\|\mr_s\|_2.
\end{align*}
Substituting the previous two inequalities into (\ref{e_aux31}) gives
\begin{align}\label{e_aux34}
\frac{\|\mf_s\|_2}{\|\ma_1\|_2}\leq \epsilon_F\equiv (\epsilon_A+\epsilon_s)\kappa(\mr_s).
\end{align}
Furthermore, (\ref{e_aux32}) implies
\begin{align}\label{e_aux33}
\|\widehat{\ma}_1\|_2=\|\ma_1+\mf_s\|_2\leq \|\ma_1\|_2(1+\epsilon_F).
\end{align}

\item Bound $\|\widehat{\mg}_1\|_2$.
From (\ref{e_aux32}) and (\ref{s_12a}) follows
 \begin{align}\label{e_aux35}
 \widehat{\mg}_1&=(\ma_1+\mf_s)^T(\ma_1+\mf_s) +\me_1, \qquad 
 \epsilon_1\equiv\frac{\|\me_1\|_2}{\|\widehat{\ma}_1\|_2^2}.
\end{align}
Isolate the perturbations,
\begin{align}\label{e_aux35a}
\widehat{\mg}_1=\underbrace{\ma_1^T\ma_1}_{\mg_1}+
\underbrace{\mf_s^T\ma_1+\ma_1^T\mf_s+\mf_s^T\mf_s+\me_1}_{\mf_1}
\end{align}
where, with the abbreviation in (\ref{e_aux34}),
\begin{align*}
\|\mf_1\|_2\leq 2\|\ma_1\|_2\|\mf_s\|_2+\|\mf_s\|_2^2+\|\me_1\|_2=
\|\ma_1\|_2^2(2\epsilon_F+\epsilon_F^2)+\|\me_1\|_2.
\end{align*}
We bound the last summand with the help of (\ref{e_aux33}),
\begin{align*}
\|\me_1\|_2=\|\widehat{\ma}_1\|_2^2\>\frac{\|\me_1\|_2}{\|\widehat{\ma}_1\|_2^2}
\leq \|\ma_1\|_2^2\>(1+\epsilon_F)^2\epsilon_1.
\end{align*}
Substitute the above into the bound for $\|\mf_1\|_2$,
\begin{align}\label{e_aux37}
\|\mf_1\|_2\leq \|\ma_1\|_2^2\>(2\epsilon_F+\epsilon_F^2+(1+\epsilon_F)^2\epsilon_1)
\end{align}
and substitute this, in turn, into the norm of (\ref{e_aux35a}),
\begin{align}
\|\widehat{\mg}_1\|_2&\leq \|\ma_1\|_2^2+\|\mf_1\|_2
\leq \|\ma_1\|_2^2+\|\ma_1\|_2^2\>(2\epsilon_F+\epsilon_F^2+
(1+\epsilon_F)^2\epsilon_1)\nonumber\\
&=\|\ma_1\|_2^2\>(1+\epsilon_F)^2(1+\epsilon_1).\label{e_aux36}
\end{align}

\item Bound $\|\me_2\|_2$. From (\ref{s_12a}) and (\ref{e_aux36}) follows
\begin{align}\label{e_aux38}
\|\me_2\|_2&=\|\widehat{\mg}_1\|_2\>\underbrace{\frac{\|\me_2\|_2}{\|\widehat{\mg}_1\|_2}}_{\epsilon_2}
\leq \|\ma_1\|_2^2\left(1+\epsilon_F\right)^2(1+\epsilon_1)\epsilon_2.
\end{align}

\item Show that $\widehat{\mg}_1+\me_2$ is positive definite. From (\ref{e_aux35a}) follows
\begin{align*}
\widehat{\mg}_1+\me_2=\mg_1+\mf_1+\me_2.
\end{align*}
By assumption, $\me_1$ and $\me_2$ are symmetric. Hence, $\mf_1$ is symmetric
and  we can apply Weyl's theorem
as in step~4 in the proof of Theorem~\ref{t_perturb1},
\begin{align*}
|\lambda_j(\widehat{\mg}_1+\me_2)-\lambda_j(\mg_1)|\leq \|\mf_1+\me_2\|_2, \qquad 1\leq j\leq n.
\end{align*}
Bound the right-hand side with (\ref{e_aux37}) and (\ref{e_aux38}),
\begin{align*}
\|\mf_1+\me_2\|_2&\leq \|\mf_1\|_2+\|\me_2\|_2\\
&\leq \|\ma_1\|_2^2 \left(2\epsilon_F+\epsilon_F^2+(1+\epsilon_F)^2\epsilon_1+
\left(1+\epsilon_F\right)^2(1+\epsilon_1)\epsilon_2\right)\\
&=\|\ma_1\|_2^2\underbrace{\left(2\epsilon_F+\epsilon_F^2+(1+\epsilon_F)^2(\epsilon_1+
(1+\epsilon_1)\epsilon_2)\right)}_{\gamma_2}.
\end{align*}
Insert this into the above bound for Weyl's theorem,
 \begin{align}\label{e_weyla}
 |\lambda_j(\widehat{\mg}_1+\me_2)-\lambda_j(\mg_1)|\leq 
 \|\ma_1\|_2^2\>\gamma_2,\qquad 1\leq j\leq n.
 \end{align} 
In particular, if the smallest eigenvalue satisfies 
\begin{align}\label{e_ge1ba}
0<\lambda_n(\mg_1)-\|\ma_1\|_2^2\>\gamma_2\leq \lambda_n(\widehat{\mg}_1+\me_2),
\end{align}
 then $\widehat{\mg}_1+\me_2$ is nonsingular.
With  $\lambda_n(\mg_1)=\lambda_n(\ma_1^T\ma_1)=1/\|\ma_1^{\dagger}\|_2^2$, the first 
inequality in (\ref{e_ge1ba})
is equivalent to $\kappa_2(\ma_1)\gamma_2<1$, which
is true by assumption. 
Hence $\lambda_n(\widehat{\mg}_1+\me_2)>0$, and $\widehat{\mg}_1+\me_2$ is symmetric
positive definite.

\item Bound the condition number (\ref{e_perturb00a}).
 From (\ref{s_22a}) and  (\ref{e_weyla})  follows
  \begin{align*}
  \|\widehat{\mr}_2\|_2^2 &=\lambda_1(\widehat{\mg}_1+\me_2)
  \leq \lambda_1(\mg_1)+\|\ma_1\|_2^2\>\gamma_2=\|\ma_1\|_2^2\>(1+\gamma_2)
  \end{align*}
  and (\ref{e_ge1ba}) implies
    \begin{align}\label{e_rinva}
 \|\widehat{\mr}_2^{-1}\|_2^2&=\frac{1}{\lambda_n(\widehat{\mg}_1+\me_2)}\leq 
 \frac{1}{\lambda_n(\mg_1)-\|\ma_1\|_2^2\>\gamma_2}=
 \frac{\|\ma_1^{\dagger}\|_2^2}{1-\|\ma_1\|_2^2\|\ma_1^{\dagger}\|_2^2\>\gamma_2}.
 \end{align}
Combine the two bounds,
\begin{align*}
\kappa(\widehat{\mr}_2)^2\leq \kappa(\ma_1)^2\frac{1+\gamma_2}{1-\kappa(\ma_1)^2\>\gamma_2}.
\end{align*}

 \item Determine the  deviation of $\widehat{\mq}$ from orthonormality.
 Since $\widehat{\mg}_1+\me_2$ is nonsingular, so is $\widehat{\mr}_2$.
Substitute $\widehat{\mq}=(\widehat{\ma}_1+\me_3)\widehat{\mr}_2^{-1}$ from (\ref{s_32a}) into
  \begin{align*}
  \widehat{\mq}^T\widehat{\mq} &= \widehat{\mr}_2^{-T}(\widehat{\ma}_1+\me_3)^T
  (\widehat{\ma}_1+\me_3)\widehat{\mr}_2^{-1}\\
  &=\widehat{\mr}_2^{-T}(\widehat{\mg}_1+\mf_3)\widehat{\mr}_2^{-1}
 =\widehat{\mr}_2^{-T}(\widehat{\mg}_1+\me_2-\me_2+\mf_3)\widehat{\mr}_2^{-1}\\
  &=\mi+\widehat{\mr}_2^{-T}(\mf_3-\me_2)\widehat{\mr}_2^{-1},
  \end{align*}
  where the last equality follows from (\ref{s_22a}), and we abbreviate
  \begin{align*}
   \mf_3\equiv \widehat{\ma}_1^T\me_3+\me_3^T\widehat{\ma}_1+\me_3^T\me_3-\me_1.
   \end{align*}
  Therefore, the absolute deviation of $\widehat{\mq}$ from orthonormality is
  \begin{align*}
  \mi-\widehat{\mq}^T\widehat{\mq}=\widehat{\mr}_2^{-T}(\me_2-\mf_3)\widehat{\mr}_2^{-1}.
  \end{align*}

 \item Bound the deviation (\ref{e_perturb01a}) from orthonormality.
 The norm of the previous expression is bounded by
 \begin{align}\label{e_devoa}
 \| \mi-\widehat{\mq}^T\widehat{\mq}\|_2&\leq \|\widehat{\mr}_2^{-1}\|_2^2\>(\|\me_2\|_2+\|\mf_3\|_2).
 \end{align} 
  We bound $\|\mf_3\|_2$ with (\ref{s_12a}), (\ref{s_32a}) and (\ref{e_aux33}),
  \begin{align*}
  \| \mf_3\|_2 &= \|\widehat{\ma}_1^T\me_3+\me_3^T\widehat{\ma}_1+\me_3^T\me_3-\me_1\|_2\\
&    \leq 2\|\widehat{\ma}_1\|_2\|\me_3\|_2+\|\me_3\|_2^2 +\|\me_1\|_2\\
&  \leq 2\|\widehat{\ma}_1\|_2\|\me_3\|_2+\|\me_3\|_2^2 +\|\widehat{\ma}_1\|_2\>\epsilon_1\\
&\leq \|\widehat{\ma}_1\|_2^2\left(\epsilon_1+2\epsilon_3+\epsilon_3^2\right)\leq
   \|\ma_1\|_2^2\>(1+\epsilon_F)^2  \left(\epsilon_1+2\epsilon_3+\epsilon_3^2\right).
    \end{align*}
  Insert this bound for $\|\mf_3\|_2$ and (\ref{e_aux38}) into (\ref{e_devoa}),
 \begin{align*}
 \| \mi-\widehat{\mq}^T\widehat{\mq}\|_2&\leq \|\widehat{\mr}_2^{-1}\|_2^2\>
 (\|\mf_3\|_2+\|\me_2\|_2)  \\
&\leq \|\widehat{\mr}_2^{-1}\|_2^2\|\ma_1\|_2^2
\underbrace{(1+\epsilon_F)^2\left((1+\epsilon_1) \epsilon_2+\epsilon_1+
2\epsilon_3+\epsilon_3^2\right)}_{\gamma_1}.
   \end{align*}
Then substitute  (\ref{e_rinva}) into the above.

\item Bound the residual (\ref{e_perturb2a}).
From (\ref{s_32a})  and (\ref{s_42a}) follows 
\begin{align*}
\ma-\widehat{\mq}\widehat{\mr}&=
\ma-\widehat{\mq}\underbrace{(\widehat{\mr}_2\mr_s+\me_4)}_{\widehat{\mr}}=
\ma-\widehat{\mq}\widehat{\mr}_2\mr_s-\widehat{\mq}\me_4\\
&=\ma-\underbrace{(\widehat{\ma}_1+\me_3)\widehat{\mr}_2^{-1}}_{\widehat{\mq}}
\widehat{\mr}_2\mr_s -\widehat{\mq}\me_4
=\underbrace{\ma-(\widehat{\ma}_1+\me_3)\mr_s}_{\mz_1}-
\underbrace{\widehat{\mq}\me_4}_{\mz_2}=\mz_1-\mz_2.
\end{align*}
Taking norms and dividing by $\|\ma\|_2$ gives
\begin{align}\label{e_aux01}
\frac{\|\ma-\widehat{\mq}\widehat{\mr}\|_2}{\|\ma\|_2}\leq
\frac{\|\mz_1\|_2}{\|\ma\|_2}+\frac{\|\mz_2\|_2}{\|\ma\|_2}
\end{align}
It remains to bound $\|\mz_1\|_2$ and $\|\mz_2\|_2$.
Line 4 of Algorithm~\ref{alg_1}  implies
\begin{align*}\label{e_aux00}
\eta\equiv \frac{\|\ma_1\|_2\|\mr_s\|_2}{\|\ma\|_2}&= \frac{\|\ma_1\|_2\|\ma_1^{\dagger}\ma\|_2}{\|\ma\|_2} \leq \kappa(\ma_1)\\
&\geq \frac{\|\ma_1\mr_s\|_2}{\|\ma\|_2}=\frac{\|\ma\|_2}{\|\ma\|_2}=1.
\end{align*}

As for the first summand $\mz_1$ in (\ref{e_aux01}), we apply (\ref{e_aux32}), line~4 in Algorithm~\ref{alg_1}, and (\ref{e_aux32}) again
\begin{align*}
\mz_1&\equiv \ma-(\widehat{\ma}_1+\me_3)\mr_s=\ma-(\ma_1+\mf_s+\me_3)\mr_s
=-\mf_s\mr_s-\me_3\mr_s\\ 
&=-(\me+\me_s)-\me_3\mr_s.
\end{align*}
Take norms and apply the triangle inequality,
\begin{align}\label{e_mz1}
\frac{\|\mz_1\|_2}{\|\ma\|_2}\leq
\underbrace{\frac{\|\me\|_2}{\|\ma\|_2}}_{\epsilon_A}+\frac{\|\me_s\|}{\|\ma\|_2}+
\frac{\|\me_3\mr_s\|_2}{\|\ma\|_2}.
\end{align}
We bound each summand in turn.
For the second summand, (\ref{s_02a}) implies
\begin{align*}
\frac{\|\me_s\|_2}{\|\ma\|_2}&=\underbrace{\frac{\|\me_s\|_2}{\|\ma_1\|_2\|\mr_s\|_2}}_{\epsilon_s}
\underbrace{\frac{\|\ma_1\|_2\|\mr_s\|_2}{\|\ma\|_2}}_{\eta}\leq \epsilon_s \eta.
\end{align*}
For the third summand, (\ref{s_32a}) and (\ref{e_aux34})
 imply
\begin{align*}
\frac{\|\me_3\mr_s\|_2}{\|\ma\|_2}&\leq
\underbrace{\frac{\|\me_3\|_2}{\|\widehat{\ma}_1\|_2}}_{\epsilon_3}
\|\widehat{\ma}_1\|_2\frac{\|\mr_s\|}{\|\ma\|_2}
\leq \epsilon_3\>(1+\epsilon_F)\frac{\|\ma_1\|_2\|\mr_s\|_2}{\|\ma\|_2}
=\epsilon_3(1+\epsilon_F)\eta.
\end{align*}
Insert the above bounds into (\ref{e_mz1}),
\begin{align*}
\frac{\|\mz_1\|_2}{\|\ma\|_2}\leq\epsilon_A+\left(\epsilon_s+\epsilon_3(1+\epsilon_F)\right)\eta.
\end{align*}

As for the second summand $\mz_2\equiv\widehat{\mq}\me_4$ in (\ref{e_aux01}), we 
obtain from (\ref{s_42a})
\begin{align*}
\frac{\|\mz_2\|_2}{\|\ma\|_2}&\leq \frac{\|\widehat{\ma}_1+\me_3\|_2}{\|\ma\|_2}
\|\widehat{\mr}_2^{-1}\|_2
\underbrace{\frac{\|\me_4\|_2}{\|\widehat{\mr}_2\mr_s\|_2}}_{\epsilon_4}
\|\widehat{\mr}_2\mr_s\|_2\\
&\leq \epsilon_4\kappa(\widehat{\mr}_2)\>
\frac{\|\widehat{\ma}_1\|_2+\|\me_3\|_2}{\|\ma\|_2}\|\mr_s\|_2.
\end{align*}
From  (\ref{e_aux34}) and (\ref{s_32a})
follows that the product of the last two terms is bounded by
\begin{align*}
\frac{\|\widehat{\ma}_1\|_2+\|\me_3\|_2}{\|\ma\|_2}\|\mr_s\|_2&=
\frac{\|\ma_1\|_2\|\mr_s\|_2}{\|\ma\|_2}\frac{\|\widehat{\ma}\|_2+\|\me_3\|_2}{\|\ma_1\|_2}\\
&\leq \eta\left(1+\epsilon_F+\frac{\|\me_3\|_2}{\|\ma_1\|_2}\right)
\leq \eta
\left(1+\epsilon_F+\epsilon_3\>\frac{\|\widehat{\ma}_1\|_2}{\|\ma_1\|_2}\right)\\
&\leq \eta\left(1+\epsilon_F+\epsilon_3(1+\epsilon_F)\right)
= \eta(1+\epsilon_F)(1+\epsilon_3).
\end{align*}
Substitute this into the bound for $\|\mz_2\|_2/\|\ma\|_2$, and apply (\ref{e_perturb00a}),
\begin{align*}
\frac{\|\mz_2\|_2}{\|\ma\|_2}
&\leq \epsilon_4\>(1+\epsilon_3)(1+\epsilon_F)\>\eta\>\kappa(\widehat{\mr}_2)\\
&\leq\epsilon_4 (1+\epsilon_3)(1+\epsilon_F)
\sqrt{\frac{1+\gamma_2}{1-\kappa_2(\ma_1)^2\gamma_2}}\>\eta\>\kappa(\ma_1).
\end{align*}
At least substitute the bounds for $\|\mz_1\|_2/\|\ma\|_2$ and $\|\mz_2\|_2/\|\ma\|_2$
into (\ref{e_aux01}),
\begin{align*}
\frac{\|\ma-\widehat{\mq}\widehat{\mr}\|_2}{\|\ma\|_2}&\leq
\epsilon_A+\left(\epsilon_s+(1+\epsilon_F)\epsilon_3\right)\eta\\
&\qquad\qquad +\underbrace{\epsilon_4(1+\epsilon_F)(1+\epsilon_3)}_{\gamma_3}
\sqrt{\frac{1+\gamma_2}{1-\kappa_2(\ma_1)^2\gamma_2}}\>\eta\>\kappa(\ma_1).
\end{align*}$\quad$
\end{enumerate}

\section{Randomized preconditioned Cholesky-QR (\textit{rpCholesky-QR})}\label{s_rand}
We present  our randomized preconditioned Cholesky-QR algorithm, and a probabilistic
bound on the condition number of the preconditioned matrix.

The randomized preconditioner in line~2 of Algorithm~\ref{alg_2} is motivated by the least squares solver 
\textit{Blendenpik} \cite{Blendenpik}. The matrix $\mathcal{F}\in\rmm$ is a random orthogonal
matrix, and the matrix $\ms\in\real^{c\times m}$ samples rows of $\mathcal{F}\ma$
uniformly, independently and with replacement. Remark~\ref{r_exp}
shows the Matlab implementation.

\begin{algorithm}
\caption{Randomized preconditioned Cholesky-QR (\textit{rpCholesky-QR})}\label{alg_2}
\begin{algorithmic}[1]
\REQUIRE $\ma\in\rmn$ with $\rank(\ma)=n$ 
\ENSURE Thin QR decomposition $\ma=\mq\mr$
\medskip

\STATE \COMMENT{Randomized preconditioner}
\STATE{Compute $\ma_s= \ms\mathcal{F}\ma$} 
\COMMENT{Sample $c$ rows from smoothed matrix}
\STATE{Factor $\ma_s=\mq_s\mr_s$} \qquad \COMMENT{Thin QR decomposition of sampled matrix}
\STATE{Solve $\ma_1=\ma\mr_s^{-1}$} \qquad \COMMENT{$\ma_1\in\rmn$ is preconditioned version of $\ma$}
\medskip

\STATE \COMMENT{Cholesky-QR of $\ma_1$}
\STATE{Multiply $\mg_1=\ma_1^T\ma_1$} \qquad \COMMENT{Gram matrix}
\STATE{Factor $\mg_1=\mr_2^T\mr_2$} \qquad \COMMENT{Cholesky factor $\mr_2\in\rnn$  of Gram  matrix}
\STATE{Solve $\mq=\ma_1\mr_2^{-1}$} \qquad 
\COMMENT{Orthonormal QR factor of $\ma_1$ and $\ma$}
\medskip

\STATE \COMMENT{Recover $\mr$}
\STATE{Multiply $\mr=\mr_2\mr_s$} \qquad \COMMENT{Triangular QR factor of $\ma$}
\end{algorithmic}
\end{algorithm}
 
The condition number of the preconditioned matrix $\ma_1$
can be related to that of the sampled orthononormal matrix $\ms\mathcal{F}\mq$
  \cite{Blendenpik,IpsW12,RT08}.
The generalization below expresses the singular values of $\ms\mathcal{F}\mq$ in terms of 
the singular values of~$\ma_1$.

 \begin{lemma}\label{l_31} If $\rank(\ma_s)=n$ in Algorithm~\ref{alg_2}, then
\begin{align*}
\sigma_i(\ms\mathcal{F}\mq)=1/\sigma_{n-i+1}(\ma_1), \qquad 1\leq i\leq n,
\end{align*}
and $\kappa(\ms\mathcal{F}\mq)=\kappa(\ma_1)$.
\end{lemma}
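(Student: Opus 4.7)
The plan is to exploit the fact that both $\ms\mathcal{F}\mq$ and $\ma_1$ can be expressed in terms of $\mr_2$ multiplied by a matrix with orthonormal columns, and then read off the singular values directly.

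First I would combine the QR identities from the algorithm. Writing $\ma=\mq\mr$ for the final thin QR output of Algorithm~\ref{alg_2}, with $\mr=\mr_2\mr_s$, line~2 and line~3 give $\ms\mathcal{F}\mq\mr=\ms\mathcal{F}\ma=\mq_s\mr_s$. The assumption $\rank(\ma_s)=n$ makes $\mr_s$ nonsingular, so $\mq_s$ has orthonormal columns; and $\mr_2$ is nonsingular because $\ma_1=\ma\mr_s^{-1}$ has full column rank. Hence $\mr=\mr_2\mr_s$ is invertible, and solving the above identity for $\ms\mathcal{F}\mq$ yields
\begin{align*}
\ms\mathcal{F}\mq=\mq_s\mr_s\mr^{-1}=\mq_s\mr_2^{-1}.
\end{align*}

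Next I would identify the singular values on both sides. Because $\mq_s$ has orthonormal columns, the representation $\ms\mathcal{F}\mq=\mq_s\mr_2^{-1}$ shows that
\begin{align*}
\sigma_i(\ms\mathcal{F}\mq)=\sigma_i(\mr_2^{-1})=1/\sigma_{n-i+1}(\mr_2),\qquad 1\leq i\leq n.
\end{align*}
On the other hand, line~4 and the identity $\mr=\mr_2\mr_s$ give $\ma_1=\ma\mr_s^{-1}=\mq\mr_2$, and since $\mq^T\mq=\mi_n$ we obtain $\ma_1^T\ma_1=\mr_2^T\mr_2$, so $\sigma_i(\ma_1)=\sigma_i(\mr_2)$ for $1\leq i\leq n$. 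Substituting gives the stated identity $\sigma_i(\ms\mathcal{F}\mq)=1/\sigma_{n-i+1}(\ma_1)$. The condition number statement $\kappa(\ms\mathcal{F}\mq)=\sigma_1(\ms\mathcal{F}\mq)/\sigma_n(\ms\mathcal{F}\mq)=\sigma_1(\ma_1)/\sigma_n(\ma_1)=\kappa(\ma_1)$ then follows immediately.

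There is no real obstacle here: the entire argument is a short algebraic manipulation. The only point requiring any care is to justify invertibility of $\mr_s$, $\mr_2$, and $\mr$ before dividing, which all follow from the single hypothesis $\rank(\ma_s)=n$.
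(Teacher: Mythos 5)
Your proof is correct and follows essentially the same short algebraic route as the paper: both derive $\ms\mathcal{F}\mq=\mq_s\mr_s\mr^{-1}$ from the two QR identities and then read off the singular values. The only cosmetic difference is that you simplify $\mr_s\mr^{-1}$ to $\mr_2^{-1}$ and pass through $\sigma_i(\mr_2)=\sigma_i(\ma_1)$ via $\ma_1=\mq\mr_2$, whereas the paper inserts $\mq^T$ on the right and identifies $\mr_s\mr^{-1}\mq^T=\ma_1^{\dagger}$ directly; these are equivalent manipulations.
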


\begin{proof}
From $\ma=\mq\mr$ and $\ms\mathcal{F}\ma=\mq_s\mr_s$ follows
\begin{align*}
\ms\mathcal{F}\mq\mr=\ms\mathcal{F}\ma =\ma_s=\mq_s\mr_s.
\end{align*}
Multiplying by $\mr^{-1}$  gives $\ms\mathcal{F}\mq=\mq_s\mr_s\mr^{-1}$, and the 
singular values 
\begin{align*}
\sigma_i(\ms\mathcal{F}\mq)&=\sigma_i(\mq_s\mr_s\mr^{-1})=\sigma_i(\mq_s\mr_s\mr^{-1}\mq^T)\\
&=\sigma_i(\mr_s\mr^{-1}\mq^T)=\sigma_i(\ma_1^{\dagger})=1/\sigma_{n-i+1}(\ma_1),
\qquad 1\leq i\leq n.
\end{align*}
\end{proof}

The following probabilistic bound on the rank and condition number of the preconditioned matrix
is a slight improvement over  \cite[Corollary 4.2]{IpsW12}.

\begin{theorem}\label{t_prob}
Let $\ma\in\rmn$ with $\rank(\ma)=n$ and QR factorization $\ma=\mq\mr$ where $\mq\in\rmn$
with $\mq^T\mq=\mi_n$. Let $\mathcal{F}\in\rmm$ be an orthogonal matrix, and 
let  $\mathcal{F}\mq$ 
have coherence $\mu\equiv \max_{1\leq i\leq m}{\|\ve_i^T\mathcal{F}\mq\|_2^2}$.
Let $\ms\in\real^{c\times n}$  sample $c$ rows uniformly, independently, and with replacement.

For any $0<\epsilon<1$ and $0<\delta<1$, if
\begin{align*}
c\geq 2m\,\mu\,\left(1+\frac{\epsilon}{3}\right)\,\frac{\ln{(n/\delta)}}{\epsilon^2}
\end{align*} 
then with probability at least $1-\delta$ we have  $\rank(\ma_1)=\rank(\ms\mathcal{F}\mq)=n$ and 
\begin{align*}
\kappa(\ma_1)=\kappa(\ms\mathcal{F}\mq)\leq \sqrt{\frac{1+\epsilon}{1-\epsilon}}.
\end{align*}
\end{theorem}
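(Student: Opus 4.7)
The plan is to invoke Lemma~\ref{l_31}, which already gives $\kappa(\ma_1)=\kappa(\ms\mathcal{F}\mq)$ as soon as $\ms\mathcal{F}\mq$ has full column rank, and then to reduce the theorem to a matrix Chernoff concentration bound on the sampled Gram matrix $(\ms\mathcal{F}\mq)^T(\ms\mathcal{F}\mq)$.

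Since $\mathcal{F}$ is orthogonal, $\mathcal{F}\mq$ still has orthonormal columns, so $\sum_{i=1}^m (\mathcal{F}\mq)^T\ve_i\ve_i^T(\mathcal{F}\mq) = \mi_n$, and by the coherence assumption $\|\ve_i^T\mathcal{F}\mq\|_2^2 \leq \mu$ for every $i$. Modeling $\ms$ with the standard $\sqrt{m/c}$ row scaling (a harmless normalization, since the subsequent $\mr_s$ step absorbs any scalar and leaves $\kappa(\ma_1)$ unaffected) writes the sampled Gram matrix as an i.i.d.\ sum
\begin{align*}
(\ms\mathcal{F}\mq)^T(\ms\mathcal{F}\mq) \;=\; \sum_{k=1}^c \tfrac{m}{c}\,(\mathcal{F}\mq)^T\ve_{i_k}\ve_{i_k}^T\mathcal{F}\mq
\end{align*}
of rank-one positive semidefinite terms, each with expectation $(1/c)\,\mi_n$ and spectral norm at most $m\mu/c$.

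Next I would apply Tropp's matrix Chernoff inequality with $R=m\mu/c$ and $\mu_{\min}=\mu_{\max}=1$. The asymmetric Bennett-type simplifications give
\begin{align*}
\Pr\bigl[\lambda_{\min}\bigl((\ms\mathcal{F}\mq)^T(\ms\mathcal{F}\mq)\bigr) \leq 1-\epsilon\bigr] &\leq n\exp\!\bigl(-\epsilon^2 c/(2m\mu)\bigr),\\
\Pr\bigl[\lambda_{\max}\bigl((\ms\mathcal{F}\mq)^T(\ms\mathcal{F}\mq)\bigr) \geq 1+\epsilon\bigr] &\leq n\exp\!\bigl(-\epsilon^2 c/(2m\mu(1+\epsilon/3))\bigr).
\end{align*}
Using the weaker (upper-tail) exponent throughout and demanding the total failure probability to be at most $\delta$ forces $c$ to be at least the size stated in the hypothesis. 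On the complement, every eigenvalue of $(\ms\mathcal{F}\mq)^T(\ms\mathcal{F}\mq)$ lies in $[1-\epsilon,1+\epsilon]$, so $\ms\mathcal{F}\mq$ has rank $n$ and singular values in $[\sqrt{1-\epsilon},\sqrt{1+\epsilon}]$, giving $\kappa(\ms\mathcal{F}\mq)\leq \sqrt{(1+\epsilon)/(1-\epsilon)}$. Lemma~\ref{l_31} then transfers both conclusions to $\ma_1$.

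The main technical obstacle is matching the stated constants on the nose: the factor $(1+\epsilon/3)$ in the denominator, the leading coefficient $2m\mu$, and the appearance of $\ln(n/\delta)$ rather than the $\ln(2n/\delta)$ a naive symmetric union bound would produce. Achieving this likely requires either keeping the two tails asymmetric and noting that the lower-tail rate is already strictly sharper (so that one does not pay twice for it), or invoking a single two-sided Chernoff-style inequality pre-optimized for sampling sums, as in \cite[Corollary 4.2]{IpsW12} whose constants this theorem claims to slightly improve. I would verify the precise constants against that corollary and sharpen the union-bound bookkeeping accordingly.
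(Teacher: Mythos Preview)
Your approach is essentially the same as the paper's: the paper's proof is just a two-line pointer stating that the argument follows the structure of \cite[Theorem~7.5]{HoI15} and rests on the concentration inequality \cite[Theorem~1.4]{Tropp12}, which is exactly the matrix-Chernoff-on-sampled-rank-one-terms route you have outlined, combined with Lemma~\ref{l_31} to transfer the conclusion to $\ma_1$. Your remaining concern about the precise constants (the $(1+\epsilon/3)$ factor and $\ln(n/\delta)$ rather than $\ln(2n/\delta)$) is exactly what those two references are meant to settle, so checking against them is the right move.
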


\begin{proof}
The structure of the proof is similar to that of \cite[Theorem 7.5]{HoI15}, and relies
on the concentration inequality \cite[Theorem 1.4]{Tropp12}.
\end{proof}

Unfortunately, the lower bound for the sampling amount in Theorem~\ref{t_prob}
is far too pessimistic. The numerical experiments in Section~\ref{s_exp} indicate that
$c=3n$ is often enough.


Our algorithm \textit{rpCholesky-QR} makes use of  the
following random orthogonal matrices, which result in almost optimal coherence.

\begin{remark}[Section 3.2 in \cite{Blendenpik}]
Let $\mathcal{F}=\mf_T\md$ where $\mf_T\in\rmm$ is a Walsh-Hadamard, discrete cosine, or Hartley transform. Let $\md\in\rmm$ be a diagonal matrix whose diagonal elements are independent
Rademacher variables, so that  $\md_{ii}=\pm 1$ with probability 1/2, $1\leq i\leq m$.
Then  with probability at least 0.95 the coherence~$\mu$ in Theorem~\ref{t_prob}
is bounded by
\begin{align*}
\mu\leq C\frac{n}{m} \ln{m}
\end{align*}
for some constant $C$.
\end{remark}

\section{Numerical Experiments}\label{s_exp}
We demonstrate the high accuracy of our algorithm \textit{rpCholesky-QR}
for very ill-conditioned matrices
and its superiority over \textit{Cholesky-QR2} \cite{YNYF2015},
another two-stage algorithm with about the same operation count.

 Section~\ref{s_exp1} illustrates the accuracy of \textit{rpCholesky-QR}
on numerically singular matrices.
Section~\ref{s_exp2} illustrates that the deviation
from orthonormality of \textit{rpCholesky} grows with only the condition number of the preconditioned
matrix, rather than its square. Section~\ref{s_exp3} 
compares \textit{rpCholesky-QR} to \textit{Cholesky-QR2}.

\begin{remark}\label{r_exp}
The randomized preconditioner in line~2 in Algorithm~\ref{alg_2} is computed with the Matlab 
commands
\smallskip

\begin{verbatim}
    D = spdiags(sign(rand(m, 1)-0.5), 0, m, m);
    FA = dct(D*A);
    Sampled_rows = randi(m, [c, 1]);
    A_s = sqrt(m/c)*FA(Sampled_rows, :);
\end{verbatim}
\end{remark}
\smallskip

\subsection{Accuracy of \textit{rpCholesky-QR} for numerically singular matrices}\label{s_exp1}
Figures \ref{fig_ErrorRange3}--\ref{fig_ErrorRangeG} illustrate the deviation 
from orthonormality, residual, and the condition number
of the preconditioned matrix over 10 trials for each sampling amount~$c$. 

We apply \textit{rpCholesky-QR} to numerically singular matrices $\ma\in\rmn$ with 
$\kappa(\ma)=10^{15}$ and
worst case coherence, 
\begin{align}\label{e_ErrorRange}
\ma=\mq_A\mr_A,
\end{align}
where $\mq_A$ and $\mr_A$ are computed with the Matlab commands
\smallskip

\begin{verbatim}
    Q_A = [eye(n); zeros(m-n,n)]; 
    R_A = gallery('randsvd', n, 10^15);
\end{verbatim}
\smallskip
These matrices do not necessarily satisfy the assumption $\kappa(\ma_1)^2\gamma_2<1$ 
in Theorem~\ref{t_perturb2}.

\begin{figure}\label{fig_ErrorRange3}
\begin{center}
\resizebox{3.1in}{!}
{\includegraphics{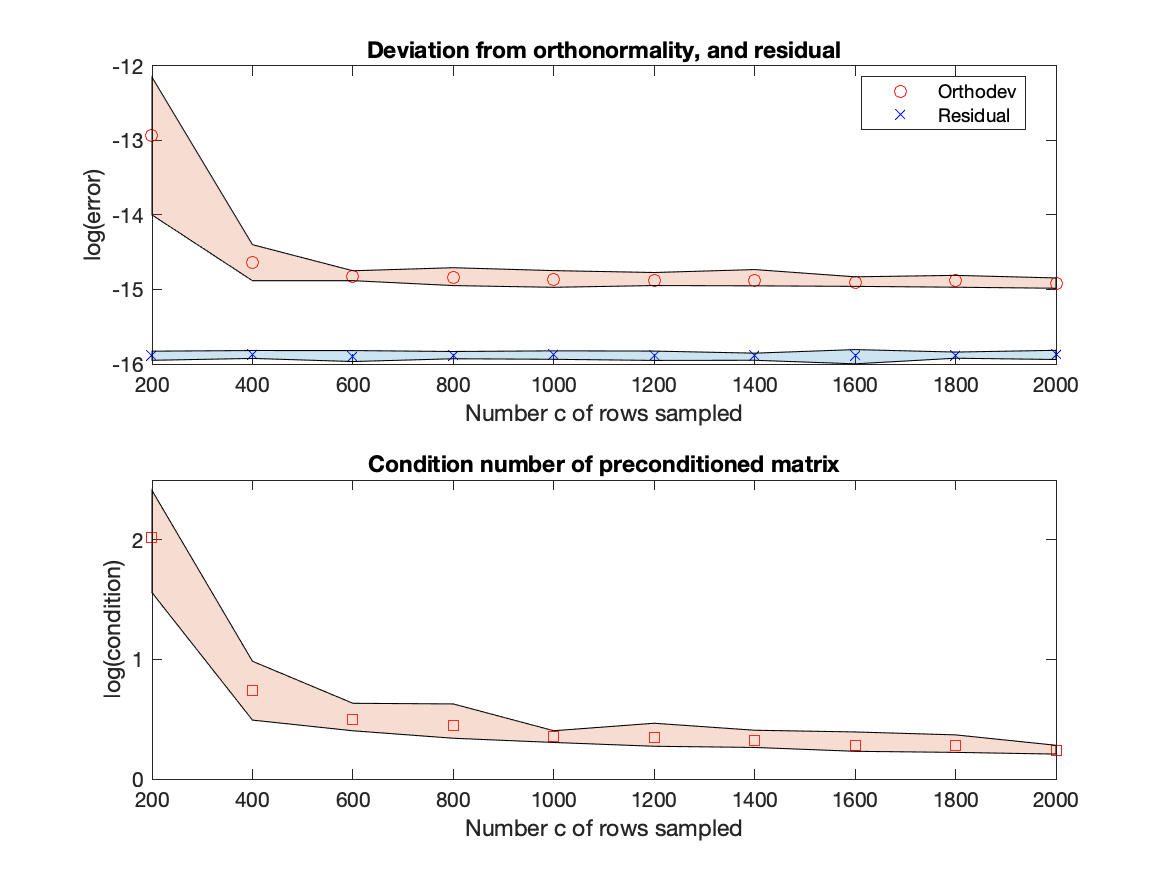}} 
\end{center}
\caption{Logarithm of errors and condition number versus sampling amount $c$ for
$\ma\in\real^{6,000\times 100}$ in (\ref{e_ErrorRange}).
Upper panel: The red region delineates
the smallest and largest deviation from orthonormality $\|\widehat{\mq}^T\widehat{\mq}-\mi\|_2$ 
over 10 runs for each sampling amount~$c$, and the red circles represent the mean.
The thin blue region delineates the smallest and largest residual
$\|\ma-\widehat{\mq}\widehat{\mr}\|_2/\|\ma\|_2$ over 10 runs for each sampling amount~$c$, 
and the blue crosses represent the mean.
Lower panel: The red region delineates the smallest and largest condition number
$\kappa_2(\ma_1)$ of the preconditioned matrix $\ma_1$ over 10 trials for each sampling 
amount~$c$, and the red squares represent the mean. }
\end{figure}

\paragraph{Figure~\ref{fig_ErrorRange3}}
The residual remains steadily at  $10^{-16}$ for all sampling
amounts. For very small sampling amounts $c=200=2n$, the deviation of $\widehat{\mq}$ from orthonormality is 
below $10^{-12}$. For sampling amounts $c\geq 600=6n$, the deviation of $\widehat{\mq}$ from orthonormality drops to about $10^{-15}$ and the condition number of
the preconditioned matrix to $\kappa(\ma_1)<10$.

\begin{figure}\label{fig_ErrorRange1}
\begin{center}
\resizebox{3.1in}{!}
{\includegraphics{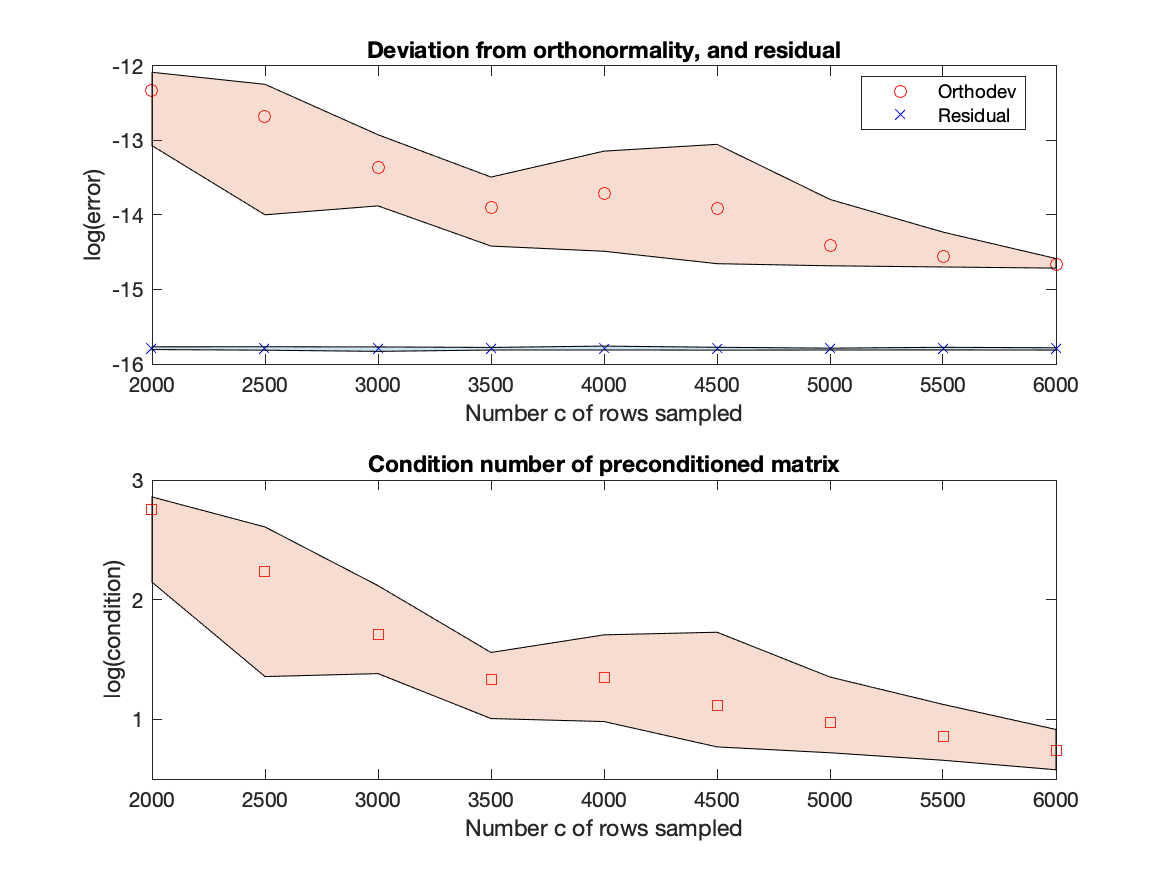}} 
\end{center}
\caption{Logarithm of errors and condition number versus sampling amount $c$ for
$\ma\in\real^{6,000\times 1,000}$ in (\ref{e_ErrorRange}).
Upper panel: The red region delineates
the smallest and largest deviation from orthonormality $\|\widehat{\mq}^T\widehat{\mq}-\mi\|_2$ 
over 10 runs for each sampling amount~$c$, and the red circles represent the mean.
The thin blue region delineates the smallest and largest residual
$\|\ma-\widehat{\mq}\widehat{\mr}\|_2/\|\ma\|_2$ over 10 runs for each sampling amount~$c$, 
and the blue crosses represent the mean.
Lower panel: The red region delineates the smallest and largest condition number
$\kappa_2(\ma_1)$ of the preconditioned matrix $\ma_1$ over 10 trials for each sampling 
amount~$c$, and the red squares represent the mean. }
\end{figure}

\paragraph{Figure~\ref{fig_ErrorRange1}}
The residual is close to $10^{-16}$ for all sampling
amounts. For sampling amounts $c\geq 3,000=3n$,
the deviation of $\widehat{\mq}$ from orthonormality starts to drop below $10^{-13}$,
and the condition number of
the preconditioned matrix to $\kappa(\ma_1)\leq 100$.

\begin{figure}\label{fig_ErrorRangeG}
\begin{center}
\resizebox{2.9in}{!}
{\includegraphics{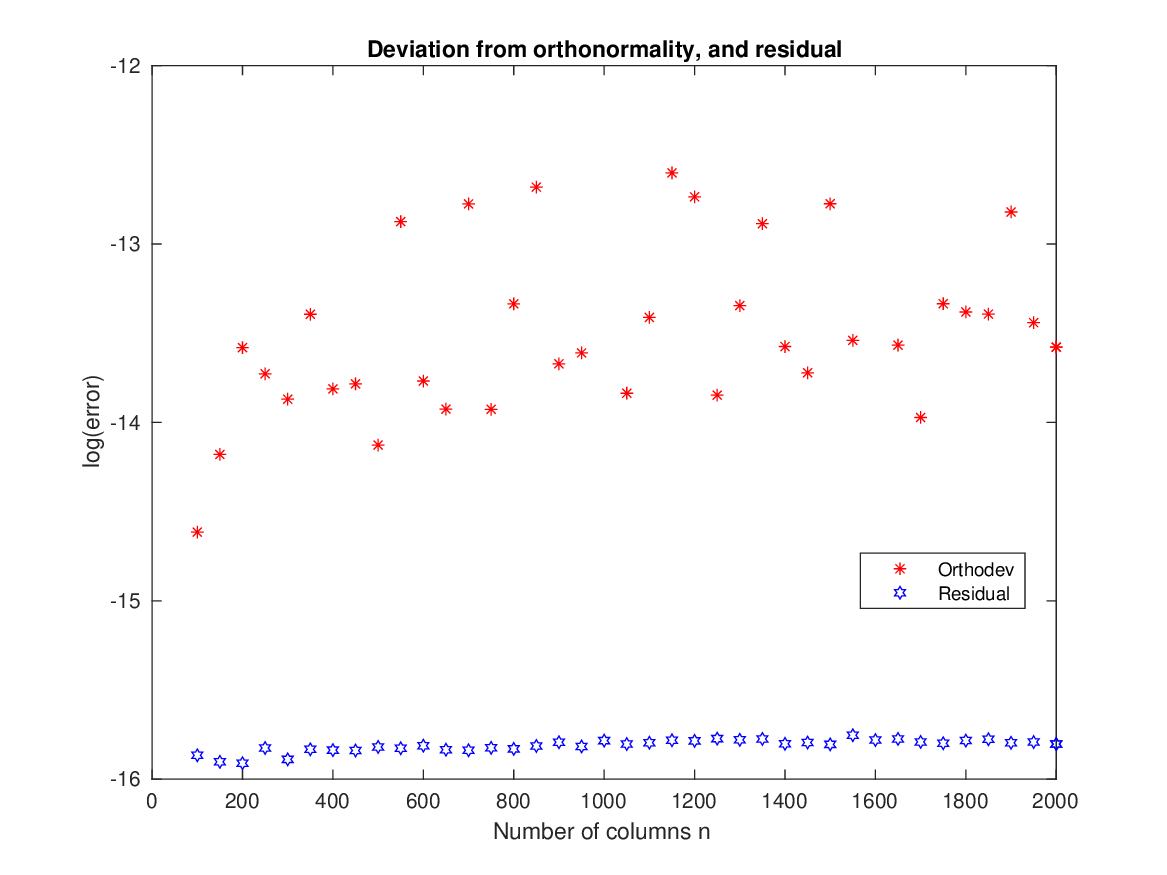}} 
\end{center}
\caption{Logarithm of deviation from orthonormality and residual versus number
of columns~$n$ for \textit{rpCholesky-QR} with $c=3n$ samples, 
applied to matrices
$\ma\in\real^{6,000\times n}$ in~(\ref{e_ErrorRange}) with $n=100, \ldots, 2,000$ columns.
The red stars represent the deviation from orthonormality
and the blue diamonds the residuals.
}
\end{figure}

\paragraph{Figure~\ref{fig_ErrorRangeG}}
We illustrate the deviation from orthonormality and residual versus number
of columns~$n$ for \textit{rpCholesky-QR} with $c=3n$ samples.
The algorithm is applied to matrices
$\ma\in\real^{6,000\times n}$ in (\ref{e_ErrorRange}) with a number of columns in the range 
$n=100, \ldots, 2,000$. 

With only a small number $3n$ of samples, \textit{rpCholesky-QR} 
produces residuals
slightly above $10^{-16}$ for all $n$, and a deviation from orthonormality below $10^{-12}$.

\paragraph{Summary}
Figures \ref{fig_ErrorRange3}--\ref{fig_ErrorRangeG} 
illustrate that even with small sampling amounts $c=3n$, \textit{rpCholesky-QR} produces
a deviation from orthonormality of at least $10^{-12}$ for numerically singular matrices~$\ma$.
The residuals remain below $10^{-15}$ and show hardly any variance, thus the residual bound
in Corollary~\ref{c_perturb2} is too pessimistic.

\subsection{Deviation from Orthonormality of \textit{rpCholesky-QR}}\label{s_exp2}
Figures \ref{fig_OrthoDev2}--\ref{fig_OrthoDevG}
 illustrate that \textit{rpCholesky-QR's} deviation from orthonormality 
 grows with the condition number of the preconditionded matrix, rather than its square,
and is captured by the simple estimate
 \begin{align}\label{e_ex1}
4\mathtt{eps} \>\kappa(\ma_1).
  \end{align}  
In contrast, setting
  $\epsilon_1=\epsilon_2=\epsilon_3=\mathtt{eps}=2.2\cdot 10^{-16}$
in Corollary~\ref{c_perturb2} gives the first order estimate
 \begin{align*}
\|\mi-\widehat{\mq}^T\widehat{\mq}\|_2&\lesssim  4\mathtt{eps} \>\kappa(\ma_1)^2.
  \end{align*}

  The matrices  $\ma\in\real^{6,000\times 100}$ 
and $\ma\in\real^{6,000\times 1,000}$ in (\ref{e_ErrorRange}) 
from Figures \ref{fig_OrthoDev2} and~\ref{fig_OrthoDev1}
are the same as in Figures \ref{fig_ErrorRange3} and \ref{fig_ErrorRange1}, respectively.
In both cases, (\ref{e_ex1}) 
estimates the correct magnitude of the deviation from orthonormality. 
Figure~\ref{fig_OrthoDevG} confirms this for matrices
$\ma\in\real^{6,000\times n}$  in (\ref{e_ErrorRange}) with a 
number of columns in the range $n=100, \ldots, 2,000$, and
small sampling amounts $c=3n$.

\begin{figure}\label{fig_OrthoDev2}
\begin{center}
\resizebox{2.9in}{!}
{\includegraphics{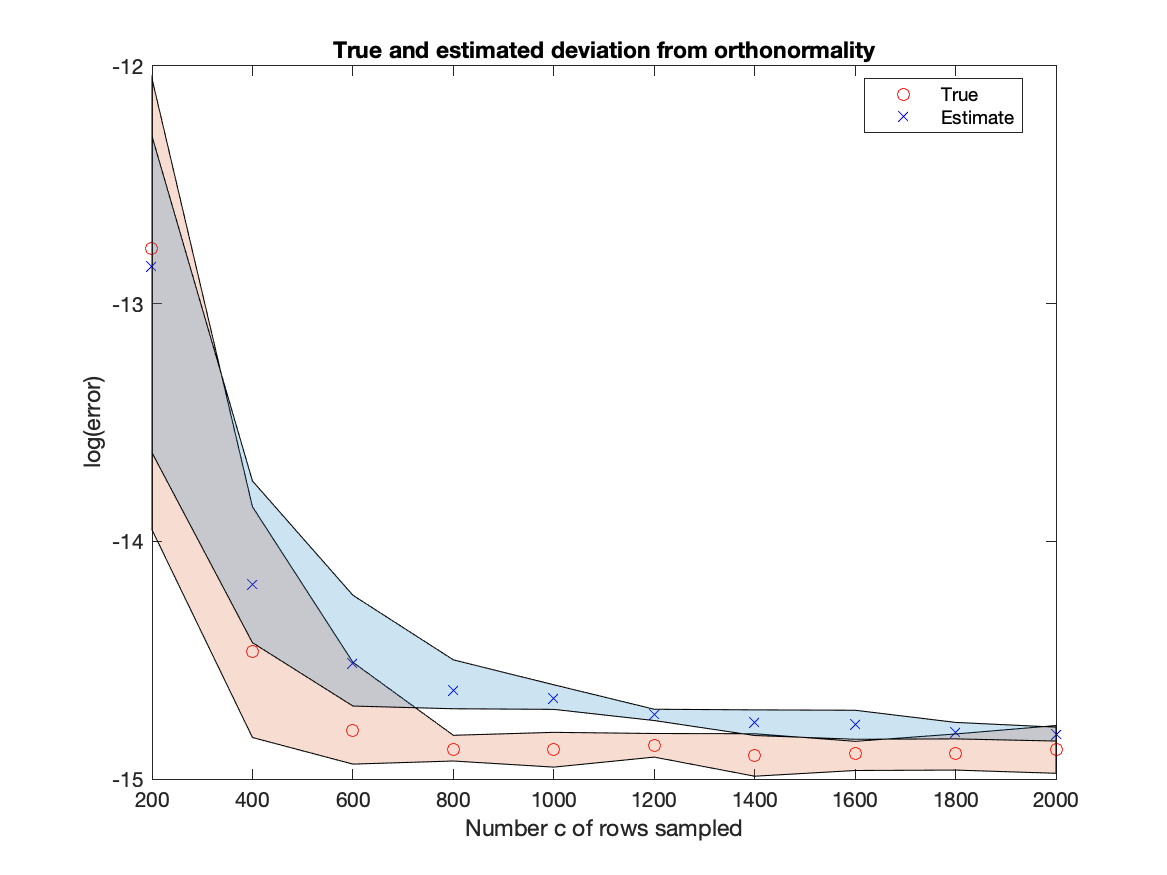}} 
\end{center}
\caption{Logarithm of deviation from orthonormality and estimate (\ref{e_ex1}) versus
sampling amount~$c$ for $\ma\in\real^{6,000\times 100}$ in (\ref{e_ErrorRange}).
The red region delineates
the smallest and largest deviation from orthonormality $\|\widehat{\mq}^T\widehat{\mq}-\mi\|_2$ 
over 10 runs for each sampling amount~$c$, and the red circles represent the mean.
The blue region delineates the smallest and largest estimate~(\ref{e_ex1})
 over 10 runs for each sampling amount~$c$, 
and the blue crosses represent the mean.}
\end{figure}

\begin{figure}\label{fig_OrthoDev1}
\begin{center}
\resizebox{2.9in}{!}
{\includegraphics{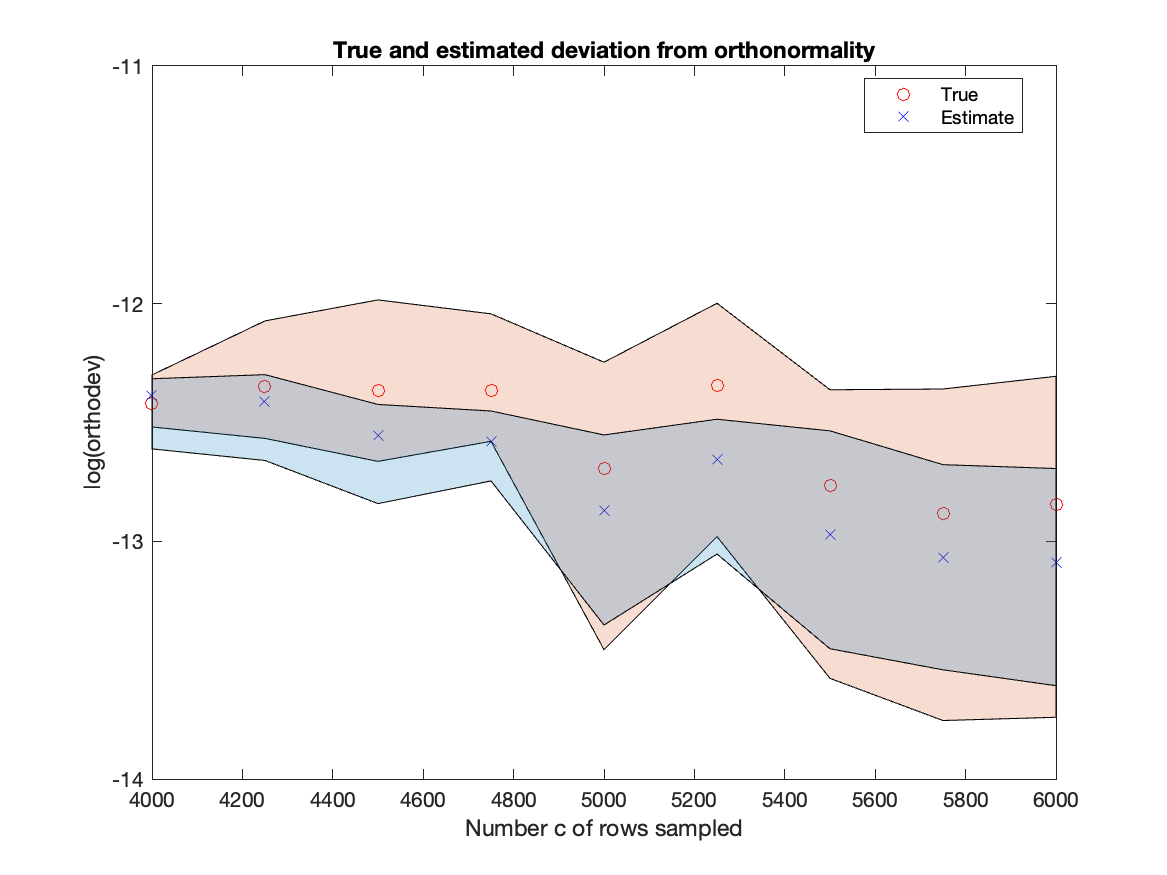}} 
\end{center}
\caption{Logarithm of deviation from orthonormality and estimate (\ref{e_ex1}) versus 
sampling amount~$c$ for
$\ma\in\real^{6,000\times 1,000}$
in (\ref{e_ErrorRange}) with $\kappa(\ma)=10^{15}$.
The red region delineates
the smallest and largest deviation from orthonormality $\|\widehat{\mq}^T\widehat{\mq}-\mi\|_2$ 
over 10 runs for each sampling amount~$c$, and the red circles represent the mean.
The blue region delineates the smallest and largest estimate~(\ref{e_ex1})
 over 10 runs for each sampling amount~$c$, 
and the blue crosses represent the mean.}
\end{figure}

\begin{figure}\label{fig_OrthoDevG}
\begin{center}
\resizebox{2.9in}{!}
{\includegraphics{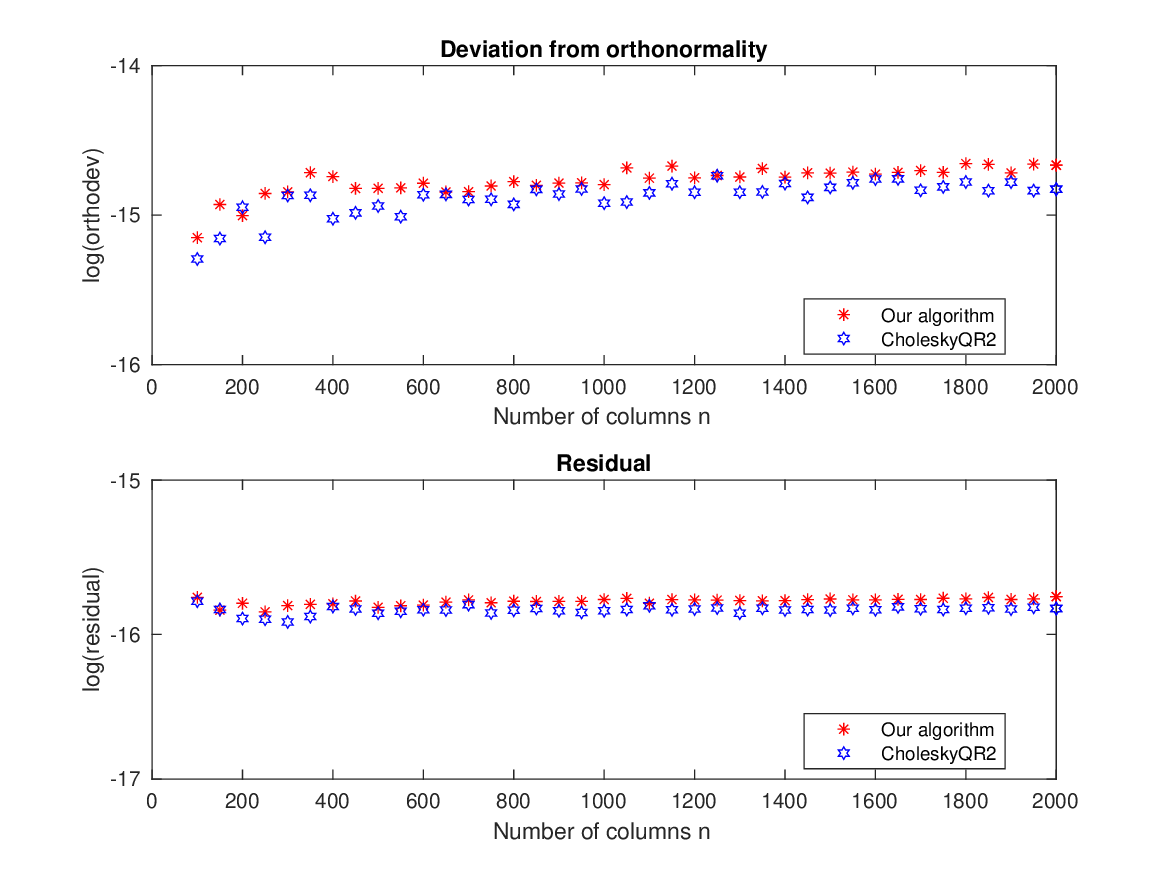}} 
\end{center}
\caption{Logarithm of deviation from orthonormality and estimate (\ref{e_ex1})
versus number of columns~$n$ for \textit{rpCholesky-QR} with $c=3n$ samples, 
applied to matrices
$\ma\in\real^{6,000\times n}$ in~(\ref{e_ErrorRange}) with $n=100, \ldots, 2,000$ columns.
The red stars represent the deviation from orthonormality
and the blue diamonds the estimate.
}
\end{figure}

\subsection{Comparison with \textit{Cholesky-QR2}}\label{s_exp3}
Figures \ref{fig_CQR2d} and~\ref{fig_CQR2G} illustrate
that \textit{rpCholesky-QR} has the same high accuracy
for moderately conditioned matrices as
\textit{Cholesky-QR2} in \cite[section 2.2]{YNYF2015}, another two-stage algorithm
with about the same operation count.

To this end, we premultiply the block upper triangular matrices (\ref{e_ErrorRange}) by a Haar
matrix, 
\begin{align}\label{e_CQR2}
\ma=\mq_A\mr_A.
\end{align}
With a condition number of $\kappa(\ma)=10^7$, the matrices $\ma$
are at the limit of \textit{Cholesky-QR2}'s capabilities. The matrices $\mq_A$ and $\mr_A$
are computed with the Matlab commands
\smallskip

\begin{verbatim}
    [H_A, ~] = qr(randn(m));
    Q_A = H_A(:, 1:n);
    R_A = gallery('randsvd', n, 10^7);
\end{verbatim}
\smallskip

\begin{figure}\label{fig_CQR2d}
\begin{center}
\resizebox{2.9in}{!}
{\includegraphics{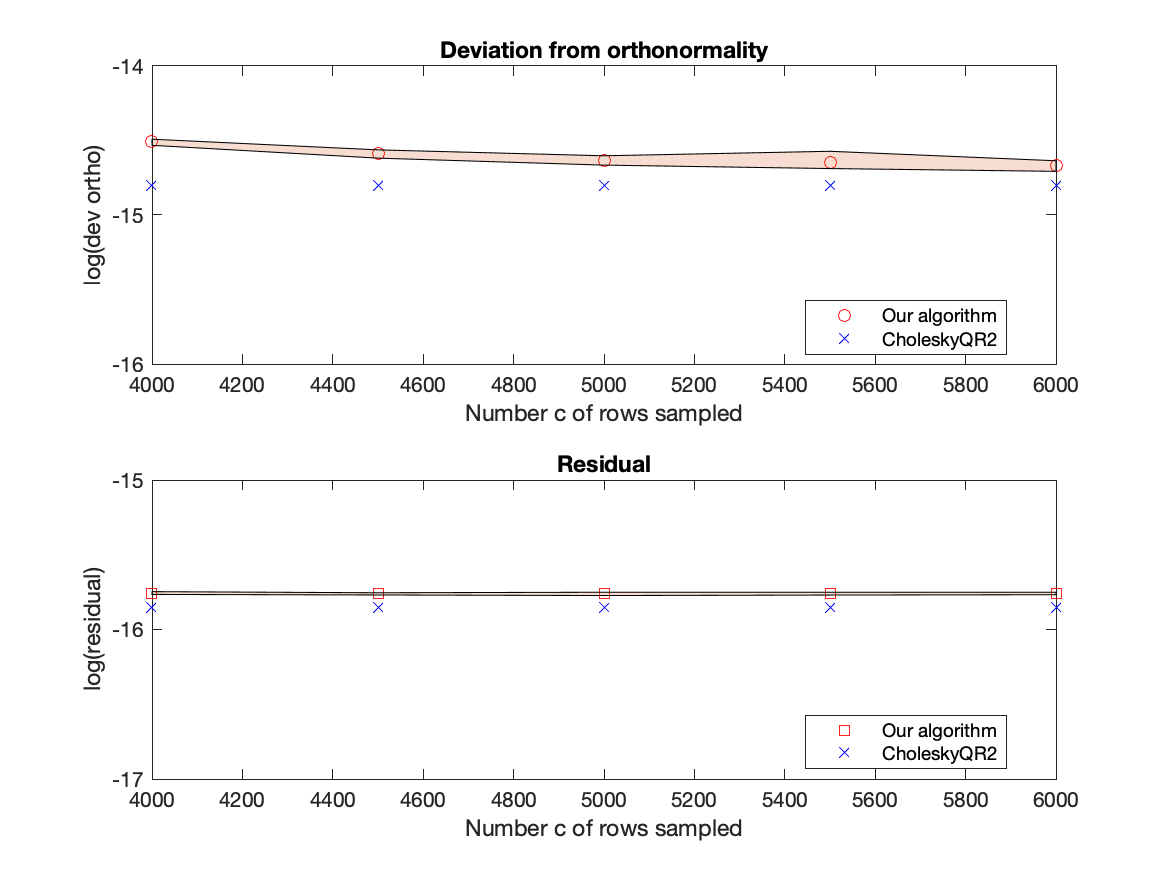}} 
\end{center}
\caption{Logarithm of deviation from orthonormality and residual versus sampling amount~$c$
for our algorithm \textit{rpCholesky-QR}
and \textit{Cholesky-QR2}, applied to 
$\ma\in\real^{6,000\times 2,000}$ in (\ref{e_CQR2}).
The upper panel shows the deviation from orthonormality 
$\|\widehat{\mq}^T\widehat{\mq}-\mi\|_2$. The red region delineates
the smallest and largest deviations
over 10 runs for each sampling amount~$c$ from \textit{rpCholesky-QR}, 
and the red circles represent the mean.
The blue crosses represent the deviations from \textit{Cholesky-QR2}.
The lower panel shows the residual
$\|\ma-\widehat{\mq}\widehat{\mr}\|_2/\|\ma\|_2$. The red region delineates
the smallest and largest residuals
over 10 runs for each sampling amount~$c$ from \textit{rpCholesky-QR}, 
and the red circles represent the mean.
The blue crosses represent the residuals from \textit{Cholesky-QR2}.
}
\end{figure}

\paragraph{Figure~\ref{fig_CQR2d}}
We compare the deviation from orthonormality and residual versus sampling amount $c$
for \textit{rpCholesky-QR} and \textit{Cholesky-QR2}. The algorithms are applied to matrices
$\ma\in\real^{6,000\times 2,000}$ in (\ref{e_CQR2}).

Even for small sampling amounts $c=3n$,
\textit{pr-Cholesky-QR} has the same accurate
deviation from orthonormality and residuals as does \textit{Cholesky-QR2}.

\begin{figure}\label{fig_CQR2G}
\begin{center}
\resizebox{2.9in}{!}
{\includegraphics{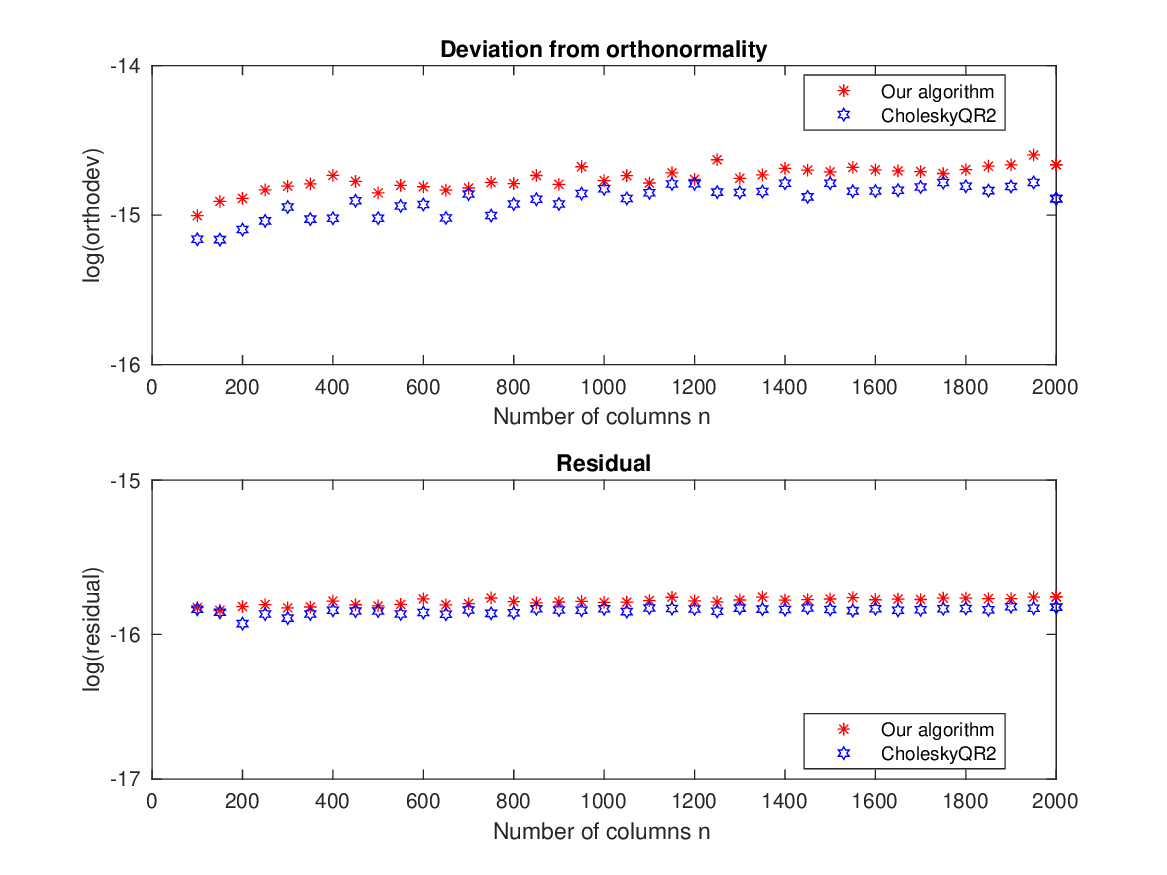}} 
\end{center}
\caption{Logarithm of deviation from orthonormality and residual versus number
of columns~$n$ for \textit{rpCholesky-QR} with $c=3n$ samples, and \textit{Cholesky-QR2}, 
applied to matrices
$\ma\in\real^{6,000\times n}$ in~(\ref{e_CQR2}) with $n=100, \ldots, 2,000$ columns.
The upper panel shows the deviation from orthonormality 
$\|\widehat{\mq}^T\widehat{\mq}-\mi\|_2$, and the lower panel the residual
$\|\ma-\widehat{\mq}\widehat{\mr}\|_2/\|\ma\|_2$.
The red stars represent \textit{rpCholesky-QR}
and the blue diamonds  \textit{Cholesky-QR2}.
}
\end{figure}

\paragraph{Figure~\ref{fig_CQR2G}}
We compare the deviation from orthonormality and residual versus number
of columns~$n$ for \textit{rpCholesky-QR} and \textit{Cholesky-QR2}.
The algorithms are applied to matrices
$\ma\in\real^{6,000\times n}$ in (\ref{e_CQR2}) with a number of columns in the range 
$n=100, \ldots, 2,000$. 
Our algorithm \textit{rpCholesky-QR} uses $c=3n$ samples.

The residuals of \textit{rpCholesky-QR} have the same magnitude,
slightly above $10^{-16}$ for all $n$,
as those of \textit{Cholesky-QR2}. The deviation from orthonormality of 
\textit{rpCholesky-QR} has the same magnitude, slightly above $10^{-15}$, as that 
of \textit{Cholesky-QR2}. 

Consequently, with only a small number $3n$ of samples, \textit{rpCholesky-QR} has the
same high accuracy for moderately conditioned matrices as does \textit{Cholesky-QR2}.

\subsection*{Acknowledgements}
We thank Laura Grigori and Arnel Smith  for helpful discussions.

\bibliography{bib}

\begin{thebibliography}{10}

\bibitem{Blendenpik}
{\sc H.~Avron, P.~Maymounkov, and S.~Toledo}, {\em Blendenpik: supercharging
  {L}apack's least-squares solver}, SIAM J. Sci. Comput., 32 (2010),
  pp.~1217--1236.

\bibitem{Balabanov2022}
{\sc O.~Balabanov}, {\em Randomized {Cholesky QR} factorizations}, 2022.
\newblock arXiv:2210.09953v2.

\bibitem{BG2022}
{\sc O.~Balabanov and L.~Grigori}, {\em Randomized {Gram-Schmidt} process with
  application to {GMRES}}, 2022.
\newblock arXiv:2011.05090v3.

\bibitem{BG2023}
\leavevmode\vrule height 2pt depth -1.6pt width 23pt, {\em Randomized block
  {Gram-Schmidt} process for solution of linear systems and eigenvalue
  problems}, 2023.
\newblock arXiv:2111.14641v2.

\bibitem{Barlow2024}
{\sc J.~L. Barlow}, {\em Reorthogonalized block classical {Gram-Schmidt} using
  two {Cholesky}-based {TSQR} algorithms}, SIAM J. Matrix Anal. Appl.,  (to
  appear).

\bibitem{Bjorck1967}
{\sc o.~Bj\"{o}rck}, {\em Solving linear least squares problems by
  {G}ram-{S}chmidt orthogonalization}, Nordisk Tidskr. Informationsbehandling
  (BIT), 7 (1967), pp.~1--21.

\bibitem{Crone81}
{\sc L.~Crone}, {\em Second order adjoint matrix equations}, Linear Algebra
  Appl., 39 (1981), pp.~61--71.

\bibitem{DSYG2018}
{\sc J.~A. Duersch, M.~Shao, C.~Yang, and M.~Gu}, {\em A robust and efficient
  implementation of {LOBPCG}}, SIAM J. Sci. Comput., 40 (2018), pp.~C655--C676.

\bibitem{FanGuo2021}
{\sc Y.~Fan, Y.~Guo, and T.~Lin}, {\em A novel randomized {XR}-based
  preconditioned {CholeskyQR} algorithm}, 2021.
\newblock arXiv:2111.11148v2.

\bibitem{FKNYY2020}
{\sc T.~Fukaya, R.~Kannan, Y.~Nakatsukasa, Y.~Yamamoto, and Y.~Yanagisawa},
  {\em Shifted {C}holesky {QR} for computing the {QR} factorization of
  ill-conditioned matrices}, SIAM J. Sci. Comput., 42 (2020), pp.~A477--A503.

\bibitem{HSBY2023}
{\sc A.~J. Higgins, D.~B. Szyld, E.~G. Boman, and I.~Yamazaki}, {\em Analysis
  of randomized {Householder-Cholesky QR} factorization with multisketching},
  2023.
\newblock arXiv:2309.05868.

\bibitem{HoI15}
{\sc J.~T. Holodnak and I.~C.~F. Ipsen}, {\em Randomized approximation of the
  {G}ram matrix: exact computation and probabilistic bounds}, SIAM J. Matrix
  Anal. Appl., 36 (2015), pp.~110--137.

\bibitem{IIbook}
{\sc I.~C.~F. Ipsen}, {\em Numerical matrix analysis}, Society for Industrial
  and Applied Mathematics (SIAM), Philadelphia, PA, 2009.
\newblock Linear systems and least squares.

\bibitem{IpsW12}
{\sc I.~C.~F. Ipsen and T.~Wentworth}, {\em The effect of coherence on sampling
  from matrices with orthonormal columns, and preconditioned least squares
  problems}, SIAM J. Matrix Anal. Appl., 35 (2014), pp.~1490--1520.

\bibitem{KMS2023}
{\sc D.~Kressner, Y.~Ma, and M.~Shao}, {\em A mixed precision {LOBPCG}
  algorithm}, Numer. Algorithms, 94 (2023), pp.~1653--1671.

\bibitem{MBMDML24}
{\sc M.~Melnichenko, O.~Balabanov, R.~Murray, J.~Demmel, M.~W. Mahoney, and
  P.~Luszczek}, {\em {CholeskyQR} with randomization and pivoting for tall
  matrices {(CQRRPT)}}, 2024.
\newblock arXiv:2311.08316v2.

\bibitem{RT08}
{\sc V.~Rokhlin and M.~Tygert}, {\em A fast randomized algorithm for
  overdetermined liner least-squares regression}, Proc. Natl. Acad. Sci. USA,
  105 (2008), pp.~13212--13217.

\bibitem{StathWu2002}
{\sc A.~Stathopoulos and K.~Wu}, {\em A block orthogonalization procedure with
  constant synchronization requirements}, SIAM J. Sci. Comput., 23 (2002),
  pp.~2165--2182.

\bibitem{TOO2020}
{\sc T.~Terao, K.~Ozaki, and T.~Ogita}, {\em L{U}-{C}holesky {QR} algorithms
  for thin {QR} decomposition}, Parallel Comput., 92 (2020), pp.~102571, 12.

\bibitem{Tropp12}
{\sc J.~A. Tropp}, {\em User-friendly tail bounds for sums of random matrices},
  Found. Comput. Math., 12 (2012), pp.~389--434.

\bibitem{YNYF2015}
{\sc Y.~Yamamoto, Y.~Nakatsukasa, Y.~Yanagisawa, and T.~Fukaya}, {\em Roundoff
  error analysis of the {C}holesky{QR}2 algorithm}, Electron. Trans. Numer.
  Anal/, 44 (2015), pp.~306--326.

\bibitem{YNYF2016}
\leavevmode\vrule height 2pt depth -1.6pt width 23pt, {\em Roundoff error
  analysis of the {C}holesky{QR}2 algorithm in an oblique inner product}, JSIAM
  Lett., 8 (2016), pp.~5--8.

\bibitem{Yama2015}
{\sc I.~Yamazaki, S.~Tomov, and J.~Dongarra}, {\em Mixed-precision {C}holesky
  {QR} factorization and its case studies on multicore {CPU} with multiple
  {GPU}s}, SIAM J. Sci. Comput., 37 (2015), pp.~C307--C330.

\end{thebibliography}
\bibliographystyle{siam}

\end{document}